\theoremstyle{plain}
\newtheorem{theorem}{Theorem}[section]
\newtheorem{corollary}[theorem]{Corollary}
\newtheorem{lemma}[theorem]{Lemma}
\theoremstyle{definition}
\newtheorem{definition}[theorem]{Definition}
\newtheorem{claim}[theorem]{Claim}
\newtheorem*{remark}{Remark}
\newcommand{\ii}{\mathrm{i}}
\newcommand{\0}{\mathbf 0}
\newcommand{\wt}[1]{\widetilde{#1}}
\newcommand{\Z}{\mathbb{Z}}
\newcommand{\Hidden}[1]{}
\newcommand{\1}{\mathbf 1}
\renewcommand{\mod}{~\text{mod}~}
\begin{document}

\title{Approximate quantum fractional revival in paths and cycles}
\author{Ada Chan\footnote{Department of Mathematics and Statistics, York University, Toronto, ON, ssachan@yorku.ca}, Whitney Drazen\footnote{Department of Mathematics, Northeastern University, Boston, MA, drazen.w@husky.neu.edu}, Or Eisenberg\footnote{Department of Mathematics, Harvard University, Cambridge, MA, oreisenberg@college.harvard.edu}, Mark Kempton\footnote{Department of Mathematics, Brigham Young University, Provo, UT, mkempton@mathematics.byu.edu}, Gabor Lippner\footnote{Department of Mathematics, Northeastern University, Boston, MA, g.lippner@northeastern.edu}}

\date{}

\maketitle

\begin{abstract}
We initiate the study of approximate quantum fractional revival in graphs, 
a generalization of pretty good quantum state transfer in graphs.  We give a complete characterization of approximate fractional revival in a graph in terms of the eigenvalues and eigenvectors of the adjacency matrix of a graph.  This characterization follows from a lemma due to Kronecker on Diophantine approximation, and is similar to the spectral characterization of pretty good state transfer in graphs.  Using this, we give a complete characterizations of when approximate fractional revival can occur in paths and in cycles.  
\end{abstract}

\section{Introduction}
In recent years, tools and methods from algebraic and spectral graph theory have found important application in quantum information theory regarding the transfer of information through a quantum network.  Namely, given a graph that represents a network of interacting qubits, information transfer in this network can be modeled by a \emph{quantum walk} on the graph.  A quantum walk is a process governed by the transition matrix
\[
U(t) := e^{itA}
\]
where $A$ is the adjacency matrix of the graph. We say that there is \emph{perfect state transfer} at time $t$ between nodes $u$ and $v$ of such a network when the $(u,v)$-entry of of $U(t)$ has absolute value 1.  The study of perfect state transfer in graphs was initiated by Bose in 2003 \cite{Bose2003}, and since then, the problem has attracted considerable attention, both from the quantum information community and from the algebraic graph theory community (see for instance \cite{christandl2005,godsil,godsil2,kay2010,us,kay2011} and references therein). 

It has been shown that in simple, unweighted graphs, perfect state transfer is very difficult to achieve and occurs only rarely \cite{godsil2}.  Known constructions that achieve perfect state transfer involve either highly specialized unweighted graphs or highly non-uniform edge weights.  For instance, in simple unweighted path graphs, perfect state transfer occurs only in paths of length 2 and 3, and not in paths of any higher length \cite{godsil}.  As such, various relaxations and generalizations of perfect state transfer have been studied.  Most notably, the notion of \emph{pretty good state transfer} was introduced by Godsil et. al. \cite{Godsil2012} and Vinet et. al. \cite{Vinet2012}.  There is pretty good state transfer from node $u$ to node $v$ in a graph if for all $\epsilon>0$ there is a $t>0$ such that $|U(t)(u,v)|>1-\epsilon$.  Pretty good state transfer has been extensively studied \cite{Godsil2012,Vinet2012,PGST_old,invol,eisenberg2019pretty}.  Work in \cite{Godsil2012}, \cite{Coutinho2016}, and \cite{vanBommel2016} gives a complete characterization of when pretty good state transfer occurs in simple, unweighted paths.

Another generalization of perfect state transfer that has been studied recently is \emph{fractional revival} in graphs \cite{ChanCoutinhoTamonVinetZhan}.  For nodes $u,v$ in a graph, there is fractional revival at time $t$ from $u$ to $v$, if \[|U(t)(u,u)|^2 + |U(t)(u,v)|^2 = 1.\] 
It was shown in \cite{ChanCoutinhoTamonVinetZhan} that if there is fractional revival from $u$ to $v$, then there is fractional revival from $v$ to $u$ at the same time.  Thus, we will simply say there is fractional revival between $u$ and $v$ at time $t$.  Where prefect state transfer represents the exact transfer of a quantum state between two nodes, fractional revival represents the transfer of a quantum state to a superposition over exactly two nodes.  Fractional revival is important in the study of entanglement generation \cite{ChanCoutinhoTamonVinetZhan}.  Fractional revival in weighted paths is studied in \cite{bernard2018graph,ChanCoutinhoTamonVinetZhan,chen2007fractional,christandl2017analytic,GenestVinetZhedanov1,GenestVinetZhedanov}.
In simple, unweighted paths, it has been shown that fractional revival can occur between two nodes only in paths of length 2, 3, or 4 \cite{ChanCoutinhoTamonVinetZhan}.  

It is natural, then, to relax the condition for fractional revival to get an approximation, in the same way that perfect state transfer is relaxed to pretty good state transfer.  That is, we say there is \emph{pretty good fractional revival} between nodes $u$ and $v$ in a graph if, for all $\epsilon >0$ there is a $t>0$ such that $U(t)$ is within $\epsilon$ of exhibiting fractional revival. We formulate this more precisely in Section \ref{sec:PGFR}.  
In \cite{ChanCoutinhoTamonVinetZhan}, the study of pretty good fractional revival is listed as a major open area of research in the study of fractional revival on graphs. 

 The main results of this paper are complete characterizations of when pretty good fractional revival can occur in a simple, unweighted paths, and in simple, unweighted cycles.  Specifially, we prove the following theorems.
\begin{theorem}\label{thm:main}
Let $P_n$ denote the path on $n$ vertices, and label the vertices $1,\cdots,n$.  Then pretty good fractional revival occurs between nodes of $P_n$ if and only if we are in one of the following cases:
\begin{itemize}
\item (symmetric nodes) $n=p\cdot 2^k-1$ for $p$ a prime.  Here fractional revival occurs between nodes $a$ and $p2^k-a$ when $a$ is a multiple of $2^{k-1}$.
\item (asymmetric nodes) $n=5\cdot 2^k-1$.  Here fractional revival occurs between nodes $2^k$ and $3\cdot 2^k$ (or, by symmetry, between $2\cdot 2^k$ and $4\cdot 2^k$).
\end{itemize}
\end{theorem}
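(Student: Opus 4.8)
The plan is to derive Theorem~\ref{thm:main} from the spectral characterization of pretty good fractional revival established in Section~\ref{sec:PGFR}, by making each of its ingredients explicit for $P_n$. Set $m := n+1$. The adjacency matrix of $P_n$ has the simple eigenvalues $\theta_r = 2\cos(\pi r/m)$, $r = 1,\dots,n$, with an eigenvector for $\theta_r$ whose entry at vertex $j$ is proportional to $\sin(\pi r j/m)$; hence the eigenvalue support of a vertex $a$ is $S_a = \{\, r : m \nmid ra\,\}$. Since all eigenvalues are simple, the Section~\ref{sec:PGFR} criterion, applied to a pair $\{a,b\}$, unpacks into: a \emph{cospectrality-type} condition, namely $S_a = S_b$ together with the requirement that the ratios
\[
\gamma_r \;:=\; \frac{\sin(\pi r b/m)}{\sin(\pi r a/m)}, \qquad r \in S_a,
\]
take only two distinct (necessarily real) values — forced because approximate fractional revival requires $|\alpha + \beta\gamma_r| = 1$ to hold simultaneously over $r\in S_a$ for a single pair of constants $\alpha,\beta$, and $x \mapsto |\alpha+\beta x|^2$ is quadratic — and a \emph{Kronecker--Diophantine} condition on the $\theta_r$, obtained by applying Kronecker's theorem on simultaneous approximation to the phases of $U(t)$ and asking for which $\Z$-linear relations among $\{\theta_r : r\in S_a\}$ the phase constraints remain compatible with the two-valued pattern of $\gamma_r$. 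The proof then has two main steps, plus the matching of the arithmetic.

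The first step is the \emph{pair classification}: determine all vertex pairs $\{a,b\}$ of $P_n$ for which $S_a = S_b$ and $\gamma_r$ takes only two values on $S_a$. The mirror pairs $b = m-a$ always qualify, since $\sin(\pi r(m-a)/m) = (-1)^{r+1}\sin(\pi r a/m)$ gives $\gamma_r = (-1)^{r+1} \in \{+1,-1\}$. To find the rest, write $\gamma_r$, after dividing out $\gcd(a,b)$, as a ratio of Chebyshev polynomials evaluated at $\cos(\pi r\gcd(a,b)/m)$ (for instance $\sin 3\theta = (1 + 2\cos 2\theta)\sin\theta$ and $\sin 4\theta = 2\cos 2\theta\cdot\sin 2\theta$) and count how many values it assumes as $r$ ranges over $S_a$; this count is governed by $m/\gcd(a,m)$. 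The analysis should show that, apart from mirror pairs, two-valuedness forces $m/\gcd(a,m) = 5$ and $b/a \in \{2,3\}$, which — using $1\le a,b\le m-1$ — means $m = 5d$ with $\{a,b\} = \{d,3d\}$ or $\{2d,4d\}$; the further restriction that $d$ be a power of $2$, giving the exceptional family $m = 5\cdot 2^k$, $\{a,b\}=\{2^k,3\cdot 2^k\}$ or $\{2\cdot 2^k,4\cdot 2^k\}$ (where $\gamma_r\in\{\tfrac{1+\sqrt5}{2},\tfrac{1-\sqrt5}{2}\}$), will come from the Kronecker condition in the next step. Carrying out this classification over all values of $\gcd(a,m)$ and $\gcd(b,m)$ is one of the technical cores.

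The second step is to run the Kronecker--Diophantine condition through the arithmetic of the field generated by the $\theta_r$. Writing $\theta_r = \zeta_{2m}^r + \zeta_{2m}^{-r}$, the integer relations among $\{\theta_r : r\in S_a\}$ lie in the maximal real subfield of $\Q(\zeta_{2m})$ and are spanned by the standard Galois-trace relations indexed by the divisors of $2m$, while the two-valued pattern of $\gamma_r$ partitions $S_a$ according to a congruence on $r$ modulo a small divisor of $m$ (the parity of $r$ in the mirror case, $r\bmod 5$ in the exceptional case). The hypotheses in Theorem~\ref{thm:main} are exactly the conditions under which these two structures match: when $m = p\cdot 2^k$ the relation lattice is small enough that the criterion holds precisely when $2^{k-1}\mid a$, and in the exceptional $b/a\in\{2,3\}$ situation the criterion additionally forces the odd part of $m$ to be $5$, i.e.\ $m = 5\cdot 2^k$; conversely, if $m$ has two distinct odd prime factors, or if $m\neq 5\cdot 2^k$ in the exceptional case, one can write down an explicit integer relation among the $\theta_r$ that destroys compatibility. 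Verifying the "if" direction then amounts to checking that in the admissible cases every trace relation respects the partition, which can be read off directly from the divisor structure.

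I expect the main obstacle to be the "only if" direction: excluding pretty good fractional revival for \emph{every} vertex pair in \emph{every} $P_n$ outside the two stated families. This combines the two pieces above — completing the pair classification (no ratio of sines other than those listed is two-valued) and then, for each mirror pair with $2^{k-1}\nmid a$ and each inadmissible $m$, producing an explicit integer relation among $\{2\cos(\pi r/m) : r\in S_a\}$ that obstructs the Kronecker condition. Building these obstructing relations uniformly from the cyclotomic (Chebyshev) description of the eigenvalues, rather than only in sporadic small cases, is where the bulk of the work lies; this portion of the argument runs parallel to, and must extend, the analysis used to settle pretty good state transfer in paths in \cite{Godsil2012, Coutinho2016, vanBommel2016}.
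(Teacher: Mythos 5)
Your overall architecture is the right one and matches the paper's: reduce Theorem \ref{thm:main} to (i) a classification of the vertex pairs of $P_n$ that are (strongly) fractionally cospectral, and (ii) the Kronecker--Diophantine eigenvalue condition of Theorem \ref{thm:kronecker} on the resulting two-group partition of the spectrum. For step (i) you take a genuinely different route: the paper characterizes fractional cospectrality of asymmetric pairs combinatorially, via the walk-count identity $A^k(u,u)-A^k(v,v)=cA^k(u,v)$ of Theorem \ref{thm:frac_cosp} together with the reflection formula for walk counts in a path, and pins down $n=5d-1$, $\{u,v\}=\{d,3d\}$ by comparing the first few lengths $k=2d,2d+2j,4d,4d+2j$ at which the counts can differ; you instead propose to analyze the sine ratios $\gamma_r=\sin(\pi rb/m)/\sin(\pi ra/m)$ directly via Chebyshev polynomials. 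That could in principle work, but it is precisely the step you leave as ``the analysis should show that \dots two-valuedness forces $m/\gcd(a,m)=5$ and $b/a\in\{2,3\}$,'' with no argument given; this classification is one of the two technical cores of the theorem and cannot be deferred. (Note also that ``two distinct values'' is not quite the right condition: the two ratios must be of the form $c$ and $-1/c$ so that the restricted eigenvectors are orthogonal and hence simultaneously diagonalizable by a symmetric unitary $H$; this orthogonality is what the paper's framework builds in and what your quadratic heuristic only implicitly supplies.)

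The second concrete gap is in the symmetric case. You assert that for $m=p\cdot 2^k$ the criterion ``holds precisely when $2^{k-1}\mid a$'' and that otherwise one writes down an obstructing relation, but the Kronecker condition for PGFR requires $\sum_{i\in\Pi_1}\ell_i\neq\pm1$, which is strictly weaker than the parity condition governing PGST. A priori a symmetric pair could fail PGST (some relation has odd $\sum_{i\in\Pi_1}\ell_i$) yet still admit PGFR (every such sum is $\pm3,\pm5,\dots$). The paper closes this by observing that the explicit relations constructed in the proof of Theorem 4 of \cite{vanBommel2016} in fact have $\sum_i\ell_i=\pm1$, so they obstruct PGFR and not merely PGST; your proposal never addresses why the obstructions you would build have sum exactly $\pm1$. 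Finally, for the ``if'' direction in the exceptional family $m=5\cdot 2^k$, the needed argument (that any integer relation among the supported eigenvalues, symmetrized over $j\leftrightarrow 10d-j$ and restricted to $j\equiv 1,4\pmod 5$, is divisible by $\Phi_{2^{k+1}}(x)=x^{2^k}+1$ and hence has even coefficient sum) is only gestured at as ``Galois-trace relations''; and for the ``only if'' direction when $d$ has an odd factor, the explicit alternating cosine relation of Lemma \ref{lem:cos} with $\sum_{j\equiv 1,4}\ell_j=1$ must actually be exhibited. As written, the proposal is a plausible roadmap whose hardest steps remain unexecuted.
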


\begin{theorem}\label{thm:main2}
Let $C_n$ denote the cycle on vertices $1,\cdots, n$.  Pretty good fractional revival occurs between vertices $a$ and $b$ in $C_n$ if and only if 
$n=2p^k$, for some prime $p$, for $k\geq 1$,
and $b=a+n/2$.
\end{theorem}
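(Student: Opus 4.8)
The plan is to reduce pretty good fractional revival (PGFR) in $C_n$ to a purely spectral/number-theoretic condition via the general spectral characterization promised in the introduction (the one coming from Kronecker's lemma), and then carry out the eigenvalue arithmetic for cycles explicitly. First I would set up the eigenstructure of $C_n$: the eigenvalues are $\lambda_j = 2\cos(2\pi j/n)$ for $j=0,\dots,n-1$, with eigenvectors given by the characters $\chi_j(a) = \omega^{ja}$, $\omega = e^{2\pi i/n}$ (or their real combinations). Because $C_n$ is vertex-transitive, without loss of generality one vertex is $0$ and the other is some $b$; I would first argue, using the spectral characterization, that PGFR between $0$ and $b$ forces a strong symmetry — namely that the pair $\{0,b\}$ must be swapped by an automorphism of $C_n$ that fixes the relevant eigenvalue supports — which pins down $b = n/2$ (so $n$ is even) and, in particular, rules out all odd $n$ and all pairs that are not antipodal. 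This is the ``$0/b$-strongly cospectral'' step familiar from the pretty good state transfer literature.

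Next, with $b = n/2$, I would translate the PGFR condition into a condition on the eigenvalue differences. Writing $n = 2m$, the eigenvalue of $\chi_j$ is $2\cos(\pi j/m)$, and the sign $\chi_j(n/2) = (-1)^j$ splits the eigenvectors into a ``$+$'' class (even $j$) and a ``$-$'' class (odd $j$). Fractional revival between $0$ and $n/2$ requires that there exist a time $t$ and phases so that $e^{it\lambda_j}$ is essentially constant on each class up to the two-dimensional mixing; the Kronecker-type lemma then says PGFR holds iff whenever an integer combination $\sum c_j \lambda_j = 0$ with $\sum c_j = 0$ holds, the corresponding combination of the class-labels also vanishes, i.e. $\sum_{j \text{ odd}} c_j \equiv 0 \pmod 2$ — stated properly in terms of the eigenvalue support of the two vertices. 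So the crux becomes: classify all $m$ such that every $\mathbb{Q}$-linear (equivalently $\mathbb{Z}$-linear, balanced) dependence among the numbers $\cos(\pi j/m)$ respects the even/odd parity of $j$.

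The main obstacle — and the technical heart of the argument — is exactly this last classification, which is a question about vanishing sums of roots of unity / linear relations among cosines, and it is where the condition $n = 2p^k$ emerges. I would use the standard machinery: $\cos(\pi j/m)$ relates to $2m$-th roots of unity, linear dependences among roots of unity are governed by the Rédei–de Bruijn–Schoenberg / Conway–Jones description (every vanishing sum of $N$-th roots of unity is a $\mathbb{Z}_{\geq 0}$-combination of ``rotated'' sums over full sets of $p$-th roots for primes $p \mid N$), and the degree of $\cos(\pi/m) = \cos(2\pi/(2m))$ over $\mathbb{Q}$ is $\varphi(2m)/2$. When $2m$ has at least two distinct odd prime factors, or is of the form $2^a p^b$ with $a \geq 2$ and $p$ odd, I would exhibit an explicit balanced integer relation among the cosines that mixes an odd index with even indices, breaking PGFR; conversely, when $n = 2m = 2p^k$ (including $p = 2$, i.e. $n$ a power of $2$), I would show the only relations are the ``trivial'' Galois-conjugation relations, which are automatically parity-respecting because multiplication by a unit mod $2p^k$ preserves the parity of the residue. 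Assembling the two directions, together with the antipodality reduction from the first paragraph, gives the stated iff. I would close by double-checking the small cases ($n=2$, $n=4$) against the formula and noting consistency with the known exact fractional revival results for short cycles.
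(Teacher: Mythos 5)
Your overall architecture (reduce to strong cospectrality, then apply the Kronecker-based spectral criterion, then classify vanishing sums of roots of unity) matches the paper's, but there is a genuine error at the pivotal step: you state the spectral criterion as requiring that every balanced integer relation $\sum_j c_j\lambda_j=0$, $\sum_j c_j=0$ have $\sum_{j\ \mathrm{odd}}c_j\equiv 0\pmod 2$. That is the pretty good \emph{state transfer} condition. The correct PGFR condition (Theorem \ref{thm:kronecker}) is the strictly weaker requirement that the class-sum not equal $\pm 1$. This is not a cosmetic slip: the entire content of the theorem is that PGFR occurs for $n=2p^k$ with $p$ an arbitrary prime, whereas PGST occurs only for $n=2^k$, and the gap between the two families is exactly the gap between ``even'' and ``$\neq\pm1$''. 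Concretely, for $n=6$ the relation $\lambda_0+\lambda_2+\lambda_4-\lambda_1-\lambda_3-\lambda_5=0$ is balanced and has even-class sum $3$; your criterion would (wrongly) rule out PGFR in $C_6$, and your proof as planned would deliver the answer $n=2^k$ rather than $n=2p^k$. Relatedly, your claim that for $n=2p^k$ ``the only relations are the trivial Galois-conjugation relations, which are automatically parity-respecting'' is false for odd $p$: the rotated full sums of $p$-th roots of unity give non-parity-respecting relations whose class-sums are multiples of $p$. What you actually need to prove in the positive direction is that every balanced relation has class-sum divisible by $p$ (the paper does this by showing the associated polynomial is divisible by $\Phi_{p^k}$ and evaluating at $1$ to get $4\sum_{r\ \mathrm{even}}\ell_r=g(1)p$), which is $\neq\pm1$ but need not be even.

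Two smaller points. First, in the negative direction your plan to ``exhibit an explicit balanced relation mixing parities'' works directly for $n=2^hp^s$ with $h\geq 2$ (the paper constructs one with class-sum $1$ from the alternating cosine identity), but for $n$ divisible by $2pq$ the paper instead derives $p\mu\equiv 0$ and $q\mu\equiv 0\pmod{2\pi}$ from two separate relations and combines them via B\'ezout; if you want a single explicit relation with class-sum $\pm 1$ you must carry out that B\'ezout combination inside the group of relations, which you should say explicitly. Second, your antipodality reduction should be phrased in terms of strong \emph{fractional} cospectrality: a priori the eigenvector ratios $\phi_j(b)/\phi_j(a)=\omega^{(b-a)j}$ need only take two values $c$ and $-1/c$, and one must check that this forces $n$ even, $b=a+n/2$, and $c=1$; the ``automorphism swapping the pair'' heuristic silently assumes $c=\pm1$ from the start.
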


In paths, we observe that between pairs of symmetric nodes, pretty good fractional revival occurs exactly when there is pretty good state transfer (see \cite{vanBommel2016}).  Surprisingly, there are instances of pretty good fractional revival between pairs of asymmetric nodes as well.  We also remark that pretty good state transfer occurs in $C_n$ if and only if $n=2^k$ for some $k$ (see \cite{MR3665556}).  Thus we extend to a larger infinite family where pretty good fractional revival occurs.  In the case of cycles, pretty good fractional revival only occurs between antipodal nodes.  

It is well-known \cite{godsil,Godsil2012} that a necessary condition for both perfect and pretty good state transfer between two nodes $u$ and $v$ is that they be \emph{strongly cospectral}, namely, that $\phi(u) = \pm\phi(v)$ for any eigenvector $\phi$ of the adjacency matrix.  In \cite{chan2020fundamentals}, we have generalized the notion of strong cospectrality to that of \emph{strong fractional cospectrality}.  That is, two vertices $u$ and $v$ are strongly fractionally cospectral if there is some constant $C\neq0$ such that either $\phi(u) = C\phi(v)$ or $\phi(u) = (-1/C)\phi(v)$ for all eigenvectors $\phi$ of the adjacency matrix.  We showed in \cite{chan2020fundamentals} that strong fractional cospectrality is a necessary condition for fractional revival.  In this paper, we show that strong fractional cospectrality is also a necessary condition for pretty good fractional revival.  In addition, work in \cite{chan2020fundamentals} characterizes fractional cospectrality in terms of a condition on walk counts at the vertices $u$ and $v$, analogous to the walk count condition for cospectrality (see \cite{godsil,godsil_smith_2017}). One of the main steps in our proof of the Theorem \ref{thm:main} is to analyze the counts of closed walks in a path to characterize when two nodes of a path are fractionally cospectral.  

In \cite{Godsil2012,invol} a characterization for pretty good state transfer between two cospectral nodes was given in terms of a number-theoretic condition on the eigenvalues of the adjacency matrix.  This number-theoretic condition arises as a consequence of a lemma due to Kronecker on Diophantine approximation.  Our primary tool in the proof of Theorems \ref{thm:main} and \ref{thm:main2}, and one of the major contributions of this paper, is a generalization of this spectral characterization for pretty good fractional revival, again relying on the Kronecker Diophantine approximation lemma (see Theorem \ref{thm:kronecker} below).

\section{Pretty good fractional revival}\label{sec:PGFR}


We remark that fractional revival at $u$ and $v$ is equivalent to saying that $U(t)$ is block diagonal at time $t$:
\[
U(t) = \begin{bmatrix}
H&0\\0&M
\end{bmatrix}
\]
where $H = \begin{bmatrix}
\alpha&\beta\\\beta&\gamma
\end{bmatrix}$ is some unitary $2\times 2$ matrix indexed by $u$ and $v$.  Perfect state transfer is the special case where $\alpha=\gamma=0$.  Pretty good fractional revival can be formulated in the same terms.  We say there is \emph{pretty good fractional revival} (PGFR) between $u$ and $v$ if for all $\epsilon>0$ there is a $t_\epsilon>0$ where $U(t_\epsilon)$ has the block form
\[
U(t_\epsilon) = \begin{bmatrix}
H(t_\epsilon) & X_\epsilon\\X_\epsilon&M_\epsilon
\end{bmatrix}
\]
 with each row of $X_\epsilon$ having magnitude at most $\epsilon$, and where $H(t_\epsilon)$ is $2\times 2$ and corresponds to vertices $u$ and $v$.
 
A necessary condition for pretty good state transfer between $u$ and $v$ is that $u$ and $v$ be strongly cospectral (see \cite{godsil}). In \cite{chan2020fundamentals}, a generalization of the notion of cospectrality is given in the framework of fractional revival.  This concept is called fractional cospectrality, which we now define.

\begin{definition}
Let $H$ be a non-diagonal $2\times2$ symmetric unitary matrix, and let $u,v$ be vertices of a graph $G$ with adjacency matrix $A$.  Given a vector $x$ indexed by vertices of $G$, let $\widetilde{x}$ be the restriction of $x$ to only vertices $u$ and $v$.
\begin{enumerate}
\item We say that $u$ and $v$ are \emph{fractionally cospectral} with respect to $H$ if there is a basis $\varphi_1,...,\varphi_n$ of eigenvectors of $A$ such that $\widetilde \varphi_i$ is either 0 or an eigenvector $H$.
\item We say that $u$ and $v$ are \emph{strongly fractionally cospectral} with respect to $H$, for every eigevector $\varphi$ of $A$, either, $\widetilde\varphi = 0$ or $\widetilde\varphi$ is an eigenvector of $H$.
\end{enumerate} 
\end{definition}

The following theorem comes from \cite{chan2020fundamentals} and gives a characterization of fractional cospectrality in terms of walk counts at $u$ and $v$ (recall that $A^k(x,y)$ enumerates walks of length $k$ between $x$ and $y$ in $G$).

\begin{theorem}[Theorem 8.3 of \cite{chan2020fundamentals}]\label{thm:frac_cosp}
Let $u$ and $v$ be vertices of $G$, and $H$ be a $2\times 2$ non-diagonal unitary symmetric matrix with eigenvectors $\psi_1 = [p,q]^T$ and $\psi_2 = [-q,p]^T$. Then the following are equivalent:
\begin{enumerate}
\item $u$ and $v$ are fractionally cospectral with respect to $H$.
\item $A^k(u,u)-A^k(v,v) = \left(\frac{p}{q} - \frac{q}{p}\right)A^k(u,v)$ for all $k$.
\end{enumerate}
\end{theorem}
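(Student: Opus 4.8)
The plan is to reduce the global identity to a condition on each eigenspace of $A$ via the spectral decomposition, and then to settle that condition with elementary $2\times2$ linear algebra together with a singular value decomposition. Write $A=\sum_r\theta_rE_r$, where the $\theta_r$ are the distinct eigenvalues of $A$ and $E_r$ is the orthogonal projection onto the $\theta_r$-eigenspace $V_r$; for a matrix $M$ indexed by $V(G)$ write $\widetilde M$ for its $2\times2$ principal submatrix indexed by $u$ and $v$, matching the notation $\widetilde x$. Since $A^k=\sum_r\theta_r^kE_r$, we have
\[
A^k(u,u)-A^k(v,v)-\big(\tfrac pq-\tfrac qp\big)A^k(u,v)=\sum_r\theta_r^k\,c_r,
\]
where $c_r:=\widetilde E_r(u,u)-\widetilde E_r(v,v)-\big(\tfrac pq-\tfrac qp\big)\widetilde E_r(u,v)$; note $pq\neq0$, since otherwise $\psi_1,\psi_2$ would be the standard basis vectors and $H$ would be diagonal. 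As the $\theta_r$ are pairwise distinct, a Vandermonde argument shows that condition~(2) is equivalent to $c_r=0$ for every $r$.

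The next ingredient is an elementary fact: for a real symmetric $2\times2$ matrix $M$ and $\psi=[p,q]^T$ with $pq\neq0$, one checks directly that $M\psi\in\R\psi$ if and only if $M_{11}-M_{22}=\big(\tfrac pq-\tfrac qp\big)M_{12}$, and in that case $[-q,p]^T$ is automatically an eigenvector of $M$ as well. Applied with $M=\widetilde E_r$ this says precisely that $c_r=0$ if and only if $\psi_1$ (equivalently $\psi_2$) is an eigenvector of $\widetilde E_r$. Combining with the previous paragraph: condition~(2) holds if and only if for every $r$ the matrix $\widetilde E_r$ is diagonalized by $\{\psi_1,\psi_2\}$, that is, $\widetilde E_r=\alpha_r\widehat{\psi}_1\widehat{\psi}_1^{\,T}+\beta_r\widehat{\psi}_2\widehat{\psi}_2^{\,T}$ for scalars $\alpha_r,\beta_r\geq0$, where $\widehat{\psi}_j=\psi_j/\|\psi_j\|$.

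It remains to equate this spectral-idempotent condition with condition~(1): the existence of an orthonormal basis $\varphi_1,\dots,\varphi_n$ of eigenvectors of $A$ with each $\widetilde\varphi_i$ equal to $0$ or an eigenvector of $H$ --- equivalently $\widetilde\varphi_i\in\{0\}\cup\R\psi_1\cup\R\psi_2$, since $H$ being non-diagonal forces its eigenspaces to be the lines $\R\psi_1$ and $\R\psi_2$. One direction is immediate: given such a basis, for each $r$ we have $E_r=\sum_{\varphi_i\in V_r}\varphi_i\varphi_i^{\,T}$, so $\widetilde E_r=\sum_{\varphi_i\in V_r}\widetilde\varphi_i\widetilde\varphi_i^{\,T}$ is a nonnegative combination of $\widehat{\psi}_1\widehat{\psi}_1^{\,T}$ and $\widehat{\psi}_2\widehat{\psi}_2^{\,T}$, hence of the stated form. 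For the converse, fix $r$, let $P\colon\R^n\to\R^2$ be the map $x\mapsto(x_u,x_v)^T$, and set $T:=P|_{V_r}\colon V_r\to\R^2$, whose adjoint is $T^*y=E_rP^Ty$, so that $TT^*=\widetilde E_r$. A singular value decomposition of $T$ furnishes an orthonormal basis $\{\varphi_i\}$ of $V_r$ (the right singular vectors), an orthonormal basis $w_1,w_2$ of $\R^2$ consisting of eigenvectors of $TT^*=\widetilde E_r$ (the left singular vectors, padded with $\ker\widetilde E_r$ if needed), and scalars $\sigma_i\geq0$ with $T\varphi_1=\sigma_1w_1$, $T\varphi_2=\sigma_2w_2$, and $T\varphi_i=0$ for all remaining basis vectors. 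Because $\widetilde E_r$ is diagonalized by $\{\widehat{\psi}_1,\widehat{\psi}_2\}$ we may take $w_1=\widehat{\psi}_1$ and $w_2=\widehat{\psi}_2$ (forced up to sign when $\alpha_r\neq\beta_r$, and free to choose when $\alpha_r=\beta_r$). Then $\widetilde\varphi_i=T\varphi_i$ lies in $\R\psi_1$, $\R\psi_2$, or $\{0\}$ according as $i=1$, $i=2$, or $i\geq3$; performing this in each eigenspace and taking the union --- which is orthonormal since eigenspaces for distinct eigenvalues are orthogonal --- produces the basis demanded by~(1).

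The one genuinely nonroutine step is this last converse: building an \emph{orthonormal} eigenbasis all of whose $\{u,v\}$-restrictions point along $\psi_1$ or $\psi_2$. Simply decomposing $V_r$ into the $T$-preimages of $\R\psi_1$ and $\R\psi_2$ does not yield orthogonal summands, and reconciling the eigenvector requirement with orthonormality is exactly what the singular value decomposition (equivalently, the spectral theorem for the symmetric positive semidefinite matrix $\widetilde E_r$) accomplishes. I would also note that in~(1) the basis must be read as orthonormal: a non-orthonormal eigenbasis satisfying the restriction property need not force the walk-count identity~(2).
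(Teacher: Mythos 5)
Your argument is correct. Note, however, that the paper you are working from does not prove this statement at all: it is imported verbatim as Theorem~8.3 of \cite{chan2020fundamentals}, so there is no in-paper proof to compare against. Your route --- reduce the walk identity to $c_r=0$ on each spectral idempotent $\widetilde E_r$ via Vandermonde, identify $c_r=0$ with $\psi_1$ being an eigenvector of the symmetric $2\times2$ matrix $\widetilde E_r$, and then pass between ``$\widetilde E_r$ diagonalized by $\psi_1,\psi_2$'' and the existence of an eigenbasis with the restriction property --- is essentially the standard spectral-decomposition argument one would expect in the source, and every step checks out (including $TT^*=\widetilde E_r$ and the use of the SVD to manufacture an \emph{orthonormal} basis of each eigenspace whose restrictions align with $\psi_1$, $\psi_2$, or vanish). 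Your closing caveat is also substantively right and worth flagging: as stated, the definition says only ``a basis of eigenvectors,'' but the equivalence genuinely requires orthogonality within each eigenspace --- e.g.\ a two-dimensional subspace of $\R^3$ spanned by the non-orthogonal vectors $(1,1,1)$ and $(-1,1,1)$ admits a basis whose restrictions to the first two coordinates lie along $[1,1]^T$ and $[-1,1]^T$, yet its projection has $\widetilde E=\operatorname{diag}(1,1/2)$, which is not diagonalized by those vectors. So the definition must be read with orthonormal (or at least orthogonal) eigenbases, which is the convention in \cite{chan2020fundamentals} and is what the rest of the paper implicitly uses.
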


This characterization allows us to investigate fractional cospectrality via the combinatorics of walks in a graph.  We will use this tool in Section \ref{sec:cosp}

Note that fractional cospectrality only requires the existence of a particular basis of eigenvectors whose restrictions are eigenvectors of $H$, while strong cospectrality requires this of all possible eigenvectors of $H$.  Thus, if all the eigenvalues are simple, then fractional cospectrality immediately implies strong fractional cospectrality.  Observe that when $H$ is not a multiple of the identity, $H$ has two distinct eigenvalues.  Fractional cospectrality with respect to $H$ naturally groups the eigenvalues of $A$ into two groups: one group is the eigenvalues for which the eigenvectors from our particular basis restrict to certain eigenvalue of $H$, and the other group is the eigenvalues for which the eigenvectors in the basis restrict to the other eigenvector of $H$.  Thus if two nodes are fractionally cosepctral but not strongly fractionally cospectral, then it must be the case that $A$ has an eigenvalue with multiplicity at least two, and this grouping puts this eigenvalue into both groups.


Notice that fractional revival includes the case that $H$ is the identity matrix, in which case the vertices $u$ and $v$ are \emph{periodic}.  However, approximate periodicity, or pretty good periodicity, by a standard compactness argument, occurs for every vertex simultaneously (see Section 6 of \cite{fan2013pretty}), and thus its study for pairs is uninteresting. Hence, in our discussion of pretty good fractional revival, we will exclude the case where two vertices are approximately periodic.

\subsection{A spectral characterization}

In this section we will give a spectral characterization of pretty good fractional revival, analogous to that for pretty good state transfer (see \cite{Godsil2012,invol,vanBommel2016}).  This characterization is based on the number theoretic lemma due to Kronecker.

\begin{lemma}[Kronecker]\label{lem:Kron}
Let $\theta_0,...,\theta_d$ and $\zeta_0,...,\zeta_d$ be arbitrary real numbers.  For an arbitrarily small $\epsilon$, the system of inequalities
\[
|\theta_ry - \zeta_r| < \epsilon ~~ (mod ~2\pi), ~~ (r=0,...,d),
\]
has a solution $y$ if and only if, for integers $\ell_0,...,\ell_d$, 
\[
\ell_0\theta_0+\cdots+\ell_d\theta_d = 0,
\]
implies
\[
\ell_0\zeta_0 + \cdots + \ell_d\zeta_d \equiv 0 ~~ (mod ~2\pi).
\]
\end{lemma}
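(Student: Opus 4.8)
This is the classical form of Kronecker's approximation theorem, and I would prove the two directions of the equivalence separately. The ``only if'' direction is the soft one. Assume the system has a solution $y = y_\epsilon$ for every $\epsilon > 0$, and write $\theta_r y_\epsilon - \zeta_r = 2\pi m_r(\epsilon) + \delta_r(\epsilon)$ with $m_r(\epsilon) \in \Z$ and $\abs{\delta_r(\epsilon)} < \epsilon$. If $\ell_0, \dots, \ell_d \in \Z$ satisfy $\sum_r \ell_r\theta_r = 0$, then multiplying the $r$-th relation by $\ell_r$ and summing over $r$ makes the $y_\epsilon$-term disappear, giving $\sum_r \ell_r\zeta_r = -2\pi\sum_r \ell_r m_r(\epsilon) - \sum_r \ell_r\delta_r(\epsilon)$. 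Thus $\sum_r\ell_r\zeta_r$ lies within $\bigl(\sum_r \abs{\ell_r}\bigr)\epsilon$ of the element $-2\pi\sum_r\ell_r m_r(\epsilon)$ of $2\pi\Z$; since $2\pi\Z$ is discrete, letting $\epsilon \to 0$ forces $\sum_r\ell_r\zeta_r \in 2\pi\Z$, i.e. $\sum_r\ell_r\zeta_r \equiv 0 \pmod{2\pi}$.

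For the ``if'' direction I would work in the compact abelian group $\mathbb{T} := (\R/2\pi\Z)^{d+1}$ and consider the continuous homomorphism $\Gamma\colon \R \to \mathbb{T}$ given by $\Gamma(y) = (\theta_0 y, \dots, \theta_d y) \bmod 2\pi$. Its image is a subgroup, so the closure $K := \overline{\Gamma(\R)}$ is a closed subgroup of $\mathbb{T}$; and unwinding the definition of closure, the statement that the system has an $\epsilon$-approximate solution for every $\epsilon > 0$ is exactly the statement that $\bar\zeta := (\zeta_0, \dots, \zeta_d) \bmod 2\pi$ lies in $K$. Now the characters of $\mathbb{T}$ are precisely the maps $\chi_\ell(x_0, \dots, x_d) = \exp\bigl(\ii\sum_r \ell_r x_r\bigr)$ for $\ell \in \Z^{d+1}$, and by continuity $\chi_\ell$ is trivial on $K$ if and only if it is trivial on the dense subgroup $\Gamma(\R)$, i.e. if and only if $\exp\bigl(\ii y\sum_r \ell_r\theta_r\bigr) = 1$ for all $y \in \R$, which holds precisely when $\sum_r\ell_r\theta_r = 0$. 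Since $\chi_\ell(\bar\zeta) = 1$ is equivalent to $\sum_r\ell_r\zeta_r \equiv 0 \pmod{2\pi}$, the hypothesis of the lemma says exactly that $\chi_\ell(\bar\zeta) = 1$ for every character $\chi_\ell$ that is trivial on $K$.

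The proof then concludes by invoking the duality principle that a closed subgroup $K$ of a compact abelian group equals the intersection of the kernels of all characters vanishing on it; equivalently, if $x \notin K$ then some character is trivial on $K$ but not at $x$. Granting this, the previous paragraph forces $\bar\zeta \in K$, which is what we wanted. I expect this duality principle to be the only substantive ingredient, and hence the main obstacle. It is the standard annihilator statement of Pontryagin duality; for the torus it can also be proved by hand, since $\mathbb{T}/K$ is a compact abelian Lie group and hence isomorphic to a product of a torus and a finite abelian group, whose characters obviously separate points, so that pulling such a character back along $\mathbb{T} \to \mathbb{T}/K$ produces a character of $\mathbb{T}$ that is trivial on $K$ but nontrivial at any prescribed $x \notin K$. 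An alternative route that avoids duality altogether is Weyl's averaging method --- form a nonnegative trigonometric polynomial $f$ concentrated near the origin of $\mathbb{T}$, average $f\bigl(\Gamma(y) - \bar\zeta\bigr)$ over $y \in [0,T]$, evaluate the $T \to \infty$ limit using the vanishing of $\frac1T\int_0^T e^{\ii c y}\,dy$ for $c \neq 0$, and use the hypothesis to conclude the limit is strictly positive --- but the bookkeeping needed to control the contribution of the orbit away from $\bar\zeta$ makes the duality argument the cleaner option.
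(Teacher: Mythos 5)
The paper does not prove this lemma at all --- it is quoted as a classical result of Kronecker and used as a black box --- so there is no in-paper argument to compare against. Your proof is correct and is the standard modern treatment. The ``only if'' direction is fine: fixing $\ell$ and letting $\epsilon\to 0$, the quantity $\sum_r\ell_r\zeta_r$ is trapped within $\bigl(\sum_r|\ell_r|\bigr)\epsilon$ of the discrete set $2\pi\Z$, hence lies in it. For the ``if'' direction, your reformulation is exactly right: solvability for every $\epsilon$ is the statement that $\bar\zeta$ lies in the closed subgroup $K=\overline{\Gamma(\R)}$ of the torus, the characters annihilating $K$ are precisely the $\chi_\ell$ with $\sum_r\ell_r\theta_r=0$ (triviality on the dense subgroup $\Gamma(\R)$ passes to $K$ by continuity), and the hypothesis says $\bar\zeta$ is killed by every such character, so the annihilator theorem places $\bar\zeta$ in $K$. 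Your justification of that duality step for the torus --- $\mathbb{T}^{d+1}/K$ is a compact abelian Lie group, hence a product of a torus and a finite group, whose characters separate points --- is a legitimate way to avoid invoking full Pontryagin duality. The only substantive choice you leave open is duality versus Weyl averaging, and either closes the argument; what the duality route buys is that all the analytic content is packaged into one structural fact about closed subgroups of tori, which is why it is the cleaner option here.
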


\begin{theorem}\label{thm:kronecker}
There is pretty good fractional revival between $u$ and $v$ if and only if the following two conditions are satisfied:
\begin{enumerate}
\item $u$ and $v$ are fractionally cospectral;
\item let $\Pi_1,\Pi_2$ be the two groups of eigenvalues coming from the grouping on the eigenvalus given by the fractional cospectrality (see the comment after Theorem \ref{thm:frac_cosp}). For any integers $\ell_i$, 
\begin{align*}
\sum_{i\in\Pi_1}\ell_i\lambda_i + \sum_{j\in\Pi_2}\ell_j\lambda_j &= 0\\
\sum_i\ell_i&=0
\end{align*}
implies
\[
\sum_{i\in\Pi_1}\ell_i\neq\pm1.
\]
\end{enumerate}
\end{theorem}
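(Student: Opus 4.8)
The strategy is to write $U(t)$ in the spectral basis and translate the block-diagonal approximation condition directly into a Kronecker-type Diophantine system, so that Lemma \ref{lem:Kron} applies. Let $E_1,\dots,E_m$ be the spectral idempotents of $A$ with distinct eigenvalues $\mu_1,\dots,\mu_m$, so $U(t)=\sum_r e^{it\mu_r}E_r$. First I would dispense with the necessity of fractional cospectrality: if PGFR holds between $u$ and $v$, then along a sequence $t_\epsilon$ the off-diagonal blocks $X_\epsilon\to 0$, and a compactness argument on the torus of phases $(e^{it\mu_r})_r$ lets us pass to a subsequence along which $U(t_\epsilon)$ converges to an honest fractional-revival matrix; by the already-established fact (from \cite{chan2020fundamentals}) that fractional revival forces strong fractional cospectrality, and since the limiting unitary is a genuine block-diagonal $U(s)$ for the spectral data, we extract that $u,v$ are fractionally cospectral and obtain the grouping $\Pi_1,\Pi_2$. (I would be a little careful here: the limit of the $2\times 2$ blocks could be diagonal, i.e. the "periodic" degenerate case, which the paper has explicitly excluded; I would note this is exactly the case ruled out by the hypothesis that we are not in the approximate-periodicity situation, or equivalently it is the boundary case that the condition $\sum_{i\in\Pi_1}\ell_i\neq\pm1$ is designed to avoid.)

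With fractional cospectrality in hand, restrict each eigenvector to $\{u,v\}$: for $r\in\Pi_1$ the restriction is a multiple of $\psi_1=[p,q]^T$, for $r\in\Pi_2$ a multiple of $\psi_2=[-q,p]^T$. Then the $2\times 2$ block of $U(t)$ indexed by $u,v$ is
\[
H(t)=\Bigl(\sum_{r\in\Pi_1}e^{it\mu_r}\,\|\wt{E_r}\|^2\Bigr)\psi_1\psi_1^T+\Bigl(\sum_{r\in\Pi_2}e^{it\mu_r}\,\|\wt{E_r}\|^2\Bigr)\psi_2\psi_2^T,
\]
and I would observe that $X_\epsilon$ being small is equivalent to saying that for every eigenvalue index $r$, the quantity $e^{it\mu_r}$ is (uniformly in $r\in\Pi_1$, respectively $r\in\Pi_2$) close to a single unimodular constant $c_1$ (resp. $c_2$): this is because the full rows/columns of $U(t)$ outside the $u,v$ block must collapse, which pins down the phases on each group. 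Concretely, PGFR between $u$ and $v$ holds if and only if for every $\epsilon>0$ there exist $t$, and unimodular $c_1,c_2$, with $|e^{it\mu_r}-c_1|<\epsilon$ for all $r\in\Pi_1$ and $|e^{it\mu_r}-c_2|<\epsilon$ for all $r\in\Pi_2$, AND the resulting limiting $H$ is non-diagonal (this last is where the $\pm 1$ condition enters — $H$ is diagonal precisely when $c_1=c_2$ up to the combinatorial sign, i.e. the degenerate periodic case). Writing $c_1=e^{i\gamma_1}$, $c_2=e^{i\gamma_2}$, this is a system of the Kronecker form with $\theta_r=\mu_r$ and target phases $\zeta_r=\gamma_1$ or $\gamma_2$.

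Now apply Lemma \ref{lem:Kron} with the extra free parameters $\gamma_1,\gamma_2$: the system is solvable for suitable $\gamma_1,\gamma_2$ iff for all integer tuples $(\ell_r)$ with $\sum_{r\in\Pi_1}\ell_r\mu_r+\sum_{r\in\Pi_2}\ell_r\mu_r=0$ we have $(\sum_{r\in\Pi_1}\ell_r)\gamma_1+(\sum_{r\in\Pi_2}\ell_r)\gamma_2\equiv 0\pmod{2\pi}$ — and this must hold for the specific $\gamma_1,\gamma_2$ that produce a \emph{non-diagonal} $H$. The non-diagonality requirement forces $\gamma_1-\gamma_2$ to be a specific nonzero value mod $2\pi$ (determined by $p,q$), while $\gamma_1+\gamma_2$ remains a genuinely free real parameter; feeding this back, the constraint $(\sum_{\Pi_1}\ell_r)\gamma_1+(\sum_{\Pi_2}\ell_r)\gamma_2\equiv 0$ can be satisfied for all relevant $(\ell_r)$ exactly when it is not forced to fail, and an elementary analysis (separating the "$\sum_i\ell_i=0$" relations, along which only $\gamma_1-\gamma_2$ matters, from the rest) shows the obstruction is precisely the existence of a relation with $\sum_i\ell_i=0$ but $\sum_{i\in\Pi_1}\ell_i=\pm1$. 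That final bookkeeping step — correctly tracking how the fixed value of $\gamma_1-\gamma_2$ interacts with the lattice of integer relations, and confirming that $\pm 1$ (rather than any other integer) is the sharp threshold — is the main obstacle, and is where I would spend the most care.
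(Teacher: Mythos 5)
Your overall architecture matches the paper's: establish that PGFR forces (strong) fractional cospectrality by passing to a subsequential limit of $U(t_\epsilon)$, reduce PGFR to the statement that the phases $e^{it\lambda_r}$ simultaneously approach one unimodular constant $e^{i\gamma_1}$ on $\Pi_1$ and another $e^{i\gamma_2}$ on $\Pi_2$ with $\gamma_1\not\equiv\gamma_2\pmod{2\pi}$, and then invoke Lemma \ref{lem:Kron}. The necessity direction is essentially complete in outline. But there is a genuine gap in the converse, and it sits exactly at the step you flag as ``the main obstacle'' and do not carry out. Two concrete problems. First, your claim that non-diagonality of $H$ forces $\gamma_1-\gamma_2$ to ``a specific nonzero value mod $2\pi$ (determined by $p,q$)'' is false: writing $H=e^{i\gamma_1}\psi_1\psi_1^T+e^{i\gamma_2}\psi_2\psi_2^T$, the off-diagonal entry is $(e^{i\gamma_1}-e^{i\gamma_2})pq$, so non-diagonality requires only $\gamma_1\not\equiv\gamma_2\pmod{2\pi}$ and otherwise leaves the difference completely free. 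That freedom is not a technicality; it is the engine of the converse. The paper's proof forms the set $\mathcal S$ of values $\sum_{j\in\Pi_2}\ell_j$ as $(\ell_i)$ ranges over all integer relations with $\sum_i\ell_i=0$, notes that condition 2 makes $\mathcal S$ a proper subgroup $q\Z$ of $\Z$ with $q\neq\pm1$, and then \emph{chooses} the phase gap to be $2\pi/q$ so that Kronecker's congruence condition holds for every zero-sum relation. If the gap were pinned in advance, as you assert, this choice would be unavailable and the argument would not close. Second, you correctly observe that relations with $\sum_i\ell_i\neq 0$ must also be handled, but you do not say how; the paper disposes of them by passing to the differences $\widetilde\lambda_i=\lambda_i-\lambda_1$, whose integer relations correspond exactly to the zero-sum relations among the $\lambda_i$, and then recovering the common offset $\delta_1$ as a subsequential limit of $t_\epsilon\lambda_1$. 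Without these two ingredients --- the subgroup argument identifying an admissible phase gap, and the normalization eliminating the non-zero-sum relations --- the converse is not proved, and the sharpness of the $\pm1$ threshold (rather than some other integer) is precisely what the subgroup argument delivers.
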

We will first prove the following lemmas before proceeding with the proof of the theorem.
\begin{lemma}\label{lem:pgfr-strfrcsp}
If there is pretty good fractional revival between $u$ and $v$, then $u$ and $v$ are strongly fractionally cospectral.
\end{lemma}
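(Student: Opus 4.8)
The plan is to argue by spectral decomposition and a limiting argument, following the template used for pretty good state transfer but adapted to the $2\times 2$ block form. Write $A = \sum_r \lambda_r E_r$ for the spectral decomposition of the adjacency matrix, with $E_r$ the orthogonal projection onto the $\lambda_r$-eigenspace, so that $U(t) = \sum_r e^{\ii t\lambda_r} E_r$. Fix an eigenvector $\varphi$ of $A$, say with eigenvalue $\lambda_r$, and let $\wt\varphi = (\varphi(u),\varphi(v))^T$ be its restriction to $\{u,v\}$; I want to show $\wt\varphi$ is either $\0$ or an eigenvector of the (a priori unknown) limiting $2\times 2$ unitary $H$. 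The first step is to extract a limiting block structure: by the PGFR hypothesis there is a sequence $t_\epsilon \to \infty$ (as $\epsilon \to 0$) with $U(t_\epsilon)$ in the block form displayed in Section \ref{sec:PGFR}, where the off-diagonal block $X_\epsilon$ has each row of norm at most $\epsilon$. Since the $2\times 2$ unitary group is compact, a subsequence of the blocks $H(t_\epsilon)$ converges to a unitary matrix $H$, and one checks $H$ is symmetric and not a scalar multiple of the identity (the latter because we have excluded the approximately-periodic case). This $H$ will be the matrix witnessing strong fractional cospectrality.

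Next I would relate the entries of $U(t_\epsilon)$ restricted to $\{u,v\}$ directly to the projections $E_r$. The key identity is that the $\{u,v\}\times\{u,v\}$ submatrix of $U(t)$ equals $\sum_r e^{\ii t\lambda_r}\, \wt{E_r}$, where $\wt{E_r}$ denotes the $\{u,v\}$ principal submatrix of $E_r$, and similarly the $\{u,v\}\times (V\setminus\{u,v\})$ block has entries governed by $E_r$. The condition that the off-diagonal block $X_\epsilon$ vanishes in the limit forces, for every vertex $w\notin\{u,v\}$ and every $r$, that the quantities $E_r(u,w)$ and $E_r(v,w)$ are constrained: more precisely, in the limit along the chosen subsequence, the column of $U(t_\epsilon)$ indexed by $w$, restricted to rows $u,v$, tends to $\0$. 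Writing this out via $U(t_\epsilon)e_w = \sum_r e^{\ii t_\epsilon \lambda_r} E_r e_w$ and passing to a further subsequence on which each phase $e^{\ii t_\epsilon \lambda_r}$ converges, I obtain $\sum_r c_r\, \wt{(E_r e_w)} = \0$ for unit complex numbers $c_r$. Because the eigenspaces are orthogonal, a standard argument (multiplying by the appropriate conjugate phases and averaging, or using linear independence of the $E_r$-components) upgrades this to $\wt{(E_r e_w)} = \0$ for each $r$ individually, i.e. the two rows $E_r(u,\cdot)$ and $E_r(v,\cdot)$ agree on all coordinates outside $\{u,v\}$ up to the structure imposed — equivalently, $E_r$ restricted to columns outside $\{u,v\}$ has its $u$- and $v$-rows proportional with a ratio independent of the column.

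Finally I would convert this into the strong fractional cospectrality statement. Fix an eigenvalue $\lambda_r$ with eigenspace of dimension $m_r$; the previous step shows there is a fixed $2\times 2$ relation satisfied by the $u,v$ entries of every vector in the eigenspace, namely every $\varphi$ in it has $\wt\varphi$ lying in a common line (or is $\0$), and moreover that line is an eigenvector of $H$ — this last point follows by looking at the diagonal block $H(t_\epsilon) = \sum_r e^{\ii t_\epsilon\lambda_r}\wt{E_r} \to H$ together with $\wt{E_r} = \wt\varphi\,\wt\varphi^{\,*}/\|\cdot\|$-type rank-one expressions summed over an orthonormal eigenbasis of the eigenspace, so that $H$ is simultaneously diagonalized by the lines carrying the $\wt\varphi$'s. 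Hence for every eigenvector $\varphi$ of $A$, $\wt\varphi$ is $\0$ or an eigenvector of $H$, which is exactly strong fractional cospectrality with respect to $H$. The main obstacle I anticipate is the bookkeeping in the middle step: carefully choosing nested subsequences so that all relevant phases $e^{\ii t_\epsilon\lambda_r}$ converge simultaneously, and then rigorously justifying that the vanishing of a phased sum of the orthogonal $E_r$-components forces each component to vanish — this requires either invoking Kronecker-type independence or, more cleanly, using that the $E_r$ are orthogonal idempotents so that applying $E_s$ to the identity isolates one term. Everything else is compactness plus the rank-one spectral bookkeeping that already appears in the pretty good state transfer literature.
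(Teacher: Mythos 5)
Your opening move is right and matches the paper: extract a subsequential limit $H$ of the blocks $H(t_\epsilon)$ by compactness of the $2\times 2$ unitary group, and aim to show that the restriction $\wt\varphi$ of each eigenvector is $\0$ or an eigenvector of $H$. But the middle step has a genuine gap. From $U(t_\epsilon)e_w=\sum_r e^{\ii t_\epsilon\lambda_r}E_re_w$ and the vanishing of the $u,v$ rows over $w\notin\{u,v\}$, you obtain (along a subsequence) $\sum_r c_r\,\wt{(E_re_w)}=\0$ with $|c_r|=1$, and you then claim to ``upgrade'' this to $\wt{(E_re_w)}=\0$ for each $r$. That upgrade does not follow: the orthogonality of the idempotents $E_r$ is destroyed once you restrict to the two rows $u,v$ (so applying $E_s$ no longer isolates a term), and the phases $c_r$ enjoy no independence you can exploit, since the eigenvalues may satisfy integer linear relations --- indeed the entire point of condition 2 of Theorem \ref{thm:kronecker} is to control exactly such relations. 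Worse, the target conclusion is false in general: $E_r(u,w)=E_r(v,w)=0$ for all $w\notin\{u,v\}$ fails already for paths exhibiting pretty good state transfer. Your final paragraph then leans on this broken step (and on an unjustified claim that $H$ is ``simultaneously diagonalized by the lines carrying the $\wt\varphi$'s''), so the argument as written does not close.

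The repair is much shorter and is what the paper does: bypass the spectral projections entirely and use that each unit eigenvector $\phi$ of $A$ with $A\phi=\lambda\phi$ is an eigenvector of $U(t)$ with eigenvalue $e^{\ii t\lambda}$. Reading off the two rows of $U(t_\epsilon)\phi=e^{\ii t_\epsilon\lambda}\phi$ indexed by $u$ and $v$ gives $H(t_\epsilon)\wt\phi+X_\epsilon\phi'=e^{\ii t_\epsilon\lambda}\wt\phi$, where $\phi'$ is the restriction of $\phi$ to the remaining vertices; since each row of $X_\epsilon$ has norm at most $\epsilon$ and $\|\phi\|=1$, this yields $\|H(t_\epsilon)\wt\phi-e^{\ii t_\epsilon\lambda}\wt\phi\|<2\epsilon$. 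Passing to a further subsequence on which $e^{\ii t_\epsilon\lambda}\to\rho$ gives $H\wt\phi=\rho\wt\phi$, so $\wt\phi$ is $\0$ or an eigenvector of $H$, which is strong fractional cospectrality. All the bookkeeping you were worried about in the middle step simply disappears.
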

\begin{proof}
Since we have PGFR from $u$ to $v$, then for every $\epsilon>0$, there is a $t_\epsilon>0$ such that we can write
\[
U(t_\epsilon) = \begin{bmatrix}
H(t_\epsilon)&X_\epsilon\\X_\epsilon^T&M_\epsilon
\end{bmatrix}
\]
where $H(t_\epsilon)$ is a $2\times 2$ matrix corresponding to $u$ and $v$, and $X_\epsilon$ is such that each row has magnitude at most $\epsilon$.  Let $H$ be a subsequential limit of $H(t_\epsilon)$ as $\epsilon\rightarrow0$.  Now let $\phi$ be a unit eigenvector of the adjacency matrix $A$, $A\phi= \lambda\phi$, and hence an eigenvector of $U(t)$ (namely $U(t)\phi = e^{it\lambda}\phi$).  Let $\widetilde\phi$ be the restriction of $\phi$ to $u$ and $v$.  Then since the rows of $X_\epsilon$ have magnitude bounded by $\epsilon$, we see that
\[
||H(t_\epsilon)\widetilde\phi - e^{it_\epsilon\lambda}\widetilde\phi ||< 2\epsilon.
\]
Letting $\epsilon\rightarrow0$ we see that 
\[
H\widetilde\phi = \rho\widetilde\phi
\]
for some subsequential limit $\rho$ of $e^{it_\epsilon\lambda}$.  Hence, either $\widetilde\phi$ is an eigenvector of $H$, or it is 0.  This shows that $u$ and $v$ are strongly fractionally cospectral.  
\end{proof}

We will now see that for fractionally cospectral vertices, condition 2. of Theorem \ref{thm:kronecker} actually implies strong fractional cospectrality. 
\begin{lemma}\label{lem:strcosp}
If Conditions 1. and 2. of Theorem \ref{thm:kronecker} are satisfied, then $u$ and $v$ are strongly fractionally cospectral.
\end{lemma}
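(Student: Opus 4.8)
The plan is to argue by contradiction: assume Conditions~1 and~2 of Theorem~\ref{thm:kronecker} hold but $u$ and $v$ are \emph{not} strongly fractionally cospectral, and then exhibit integers $\ell_i$ violating Condition~2. Using Condition~1, first fix a basis $\varphi_1,\dots,\varphi_n$ of eigenvectors of $A$, with $A\varphi_i=\lambda_i\varphi_i$, such that each $\widetilde\varphi_i$ is either $0$ or an eigenvector of $H$; let $\Pi_1$ (resp.\ $\Pi_2$) be the set of indices $i$ for which $\widetilde\varphi_i$ is a nonzero multiple of $\psi_1=[p,q]^T$ (resp.\ $\psi_2=[-q,p]^T$). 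Indices with $\widetilde\varphi_i=0$ may be assigned to either part, as they will play no role below.

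Next I would localize the failure of strong fractional cospectrality inside a single eigenspace. If $u,v$ are not strongly fractionally cospectral there is an eigenvector $\phi$ of $A$, say $A\phi=\lambda\phi$, with $\widetilde\phi\neq 0$ and $\widetilde\phi$ not an eigenvector of $H$. Expanding $\phi$ in the members of our basis lying in the $\lambda$-eigenspace and restricting to $\{u,v\}$ gives $\widetilde\phi=\alpha\psi_1+\beta\psi_2$, where $\alpha$ (resp.\ $\beta$) is a linear combination of the expansion coefficients of the basis vectors indexed by $\Pi_1$ (resp.\ $\Pi_2$) whose eigenvalue equals $\lambda$. Because $H$ is non-diagonal, its two eigenlines are exactly $\mathrm{span}(\psi_1)$ and $\mathrm{span}(\psi_2)$, so a vector $\alpha\psi_1+\beta\psi_2$ is $0$ or an eigenvector of $H$ unless $\alpha\neq0$ and $\beta\neq0$. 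Hence both are nonzero, which forces an index $i\in\Pi_1$ and an index $j\in\Pi_2$ with $\lambda_i=\lambda_j=\lambda$; that is, the eigenvalue $\lambda$ lies in \emph{both} groups.

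Finally I would substitute this into Condition~2: take $\ell_i=1$, $\ell_j=-1$, and $\ell_k=0$ otherwise. Then $\sum_{k\in\Pi_1}\ell_k\lambda_k+\sum_{k\in\Pi_2}\ell_k\lambda_k=\lambda_i-\lambda_j=0$ and $\sum_k\ell_k=0$, so Condition~2 would require $\sum_{k\in\Pi_1}\ell_k\neq\pm1$; but $\sum_{k\in\Pi_1}\ell_k=\ell_i=1$, a contradiction. This shows Conditions~1 and~2 imply strong fractional cospectrality.

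The linear algebra here is routine; the only non-formal point — and the main thing to get right — is the second paragraph, namely that any failure of strong fractional cospectrality can be pinned to a single repeated eigenvalue whose basis eigenvectors restrict onto the two distinct eigendirections of $H$, which in turn places that eigenvalue unambiguously in both $\Pi_1$ and $\Pi_2$. This uses only that eigenvectors of $A$ lie in eigenspaces, that the restriction map $x\mapsto\widetilde x$ is linear, and that a non-diagonal $2\times 2$ symmetric matrix has exactly two one-dimensional eigenspaces. One should also confirm that the zero-restriction indices contribute nothing to $\alpha$ and $\beta$, so the memberships $i\in\Pi_1$ and $j\in\Pi_2$ are genuinely unambiguous.
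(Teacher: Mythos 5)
Your proposal is correct and follows essentially the same route as the paper: identify a repeated eigenvalue lying in both groups $\Pi_1$ and $\Pi_2$, then take $\ell_i=1$, $\ell_j=-1$ to contradict Condition~2. The only difference is that you spell out the localization step (that a failure of strong fractional cospectrality forces some eigenvalue into both groups), which the paper merely asserts by reference to the remark following Theorem~\ref{thm:frac_cosp}; your justification of that step is sound.
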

\begin{proof}
Suppose that $u$ and $v$ are not strongly fractionally cospectral.  Then there is some eigenvalue of $A$ with multiplicity at least two that belongs to both groups in the grouping of eigenvalues.  That is $\lambda_k = \lambda_r$ for some $k\in\Pi_1,r\in\Pi_2$.  Then in Condition 2, let us choose $\ell_k=1$, $\ell_r=-1$ and $\ell_i=0$ for $i\neq k,r$.  Then clearly we have
\begin{align*}
\sum_{i\in\Pi_1}\ell_i\lambda_i + \sum_{j\in\Pi_2}\ell_j\lambda_j &= 0\\
\sum_i\ell_i&=0
\end{align*}
but 
\[
\sum_{i\in\Pi_1} \ell_i = 1
\]
contradicting Condition 2.
\end{proof}

\noindent\emph{Proof of Theorem \ref{thm:kronecker}.}
By Lemmas \ref{lem:pgfr-strfrcsp} and \ref{lem:strcosp}, we need to show that for a pair of fractionally cospectral vertices $u$ and $v$, we have pretty good fractional revival if and only if condition 2 is satisfied. 
From fractional cospectrality, we obtain a grouping of the eigenvalues and eigenvectors into two groups.  Let us denote these groups $\Pi_1$ and $\Pi_2$.  From the proof of Lemma \ref{lem:pgfr-strfrcsp}, pretty good fractional revival occurs if and only if there is a $2\times2$ symmetric unitary $H$ with eigenvalues $\rho_1,\rho_2$ such that for all $\epsilon>0$, there is a $t_\epsilon>0$ such that $e^{it_\epsilon\lambda_i}$ is close to $\rho_1$ for $i\in\Pi_1$, and $e^{it_\epsilon\lambda_j}$ is close to $\rho_2$ for $j\in\Pi_2$.  To have true pretty good fractional revival (and not approximate periodicity) we must have $\rho_1\neq\rho_2$.  This holds if and only if there is some $\delta_1,\delta_2$ with $\delta_1\not\equiv\delta_2~(mod~2\pi)$ such that the system
\begin{align*}
|t\lambda_i - \delta_1| &< \epsilon~(mod~2\pi) \text{ for } i\in\Pi_1\\
|t\lambda_i - \delta_2| &< \epsilon~(mod~2\pi) \text{ for } j\in\Pi_2
\end{align*}
has a solution $t_\epsilon$ for all $\epsilon >0$.  We need to show that this condition is equivalent to condition 2.  

Assume that we have integers $\ell_1,...,\ell_n$ with
\[
\sum_{i=1}^m\ell_i\lambda_i = 0~ \text{  and  } ~\sum_{i=1}^n\ell_i = 0
\]
By the Kronecker approximation theorem (Lemma \ref{lem:Kron}), we thus have that
\[
\sum_{i\in\Pi_1} \ell_i\delta_1 + \sum_{j\in\Pi_2}\ell_i\delta_2 = (\delta_1 - \delta_2)\sum_{i\in\Pi_1}\ell_i \equiv 0~(mod~2\pi).
\]
If 
\[
\sum_{i\in\Pi_1}\ell_i = \pm1,
\]
then this would imply $\delta_1\equiv\delta_2~(mod~2\pi)$, which would in turn imply that $\rho_1=\rho_2$.  But we are assuming pretty good fractional revival, and hence not approximate periodicity, so $\rho_1\neq\rho_2$.  Thus we conclude that 
\[
\sum_{i\in\Pi_1}\ell_i \neq \pm1.
\]

For the converse, we assume that for all $\ell_1,...,\ell_n\in\Z$,
\[
\sum_{i=1}^m\ell_i\lambda_i = 0~ \text{  and  } ~\sum_{i=1}^n\ell_i = 0
\]
implies that
\[
\sum_{i\in\Pi_1}\ell_i\neq\pm1.
\]
Define $\widetilde\lambda_i = \lambda_i - \lambda_1$
for each $i\geq2$.  Our condition is equivalent to saying
\[
\sum_{i=2}^n\ell_i\widetilde\lambda_i = 0
\]
implies
\[
\sum_{j\in\Pi_2}\ell_j \neq \pm1.
\]
Define 
\[
\mathcal S = \left\{\sum_{j\in\Pi_2}\ell_j ~\in \Z ~:~ \sum_{i=2}^n\ell_i\widetilde\lambda_i=0\right\}.
\]
Note that $\mathcal S$ is an additive subgroup of $\Z$ and is not all of $\Z$ since $\sum_{j\in\Pi_2}\ell_j$ cannot be $\pm1$.  Therefore 
\[
\mathcal S = q\Z
\]
for some integer $q\neq\pm1$.  Define 
\[
\widetilde\delta_1:=0 ~\text{ and }~ \widetilde\delta_2 := \frac{2\pi}{q} ,
\]
then
\[
\widetilde\delta_1\sum_{i\in\Pi_1,i>1}\ell_i + \widetilde\delta_2\sum_{j\in\Pi_2}\ell_j \equiv 0~(mod~2\pi)
\]
for any integers $\ell_2,...,\ell_n$ with $\sum_{i=2}^n\ell_i\widetilde\lambda_i = 0$.  Then by Lemma \ref{lem:Kron}, for all $\epsilon>0$, the system 
\begin{align*}
|t\widetilde\lambda_i - \widetilde\delta_1| &< \epsilon~(mod~2\pi) \text{ for } i\in\Pi_1\setminus\{1\}\\
|t\widetilde\lambda_i - \widetilde\delta_2| &< \epsilon~(mod~2\pi) \text{ for } j\in\Pi_2
\end{align*}
has a solution $t=t_\epsilon$.
Now choose $\delta_1$ so that $t_\epsilon\lambda_1\rightarrow\delta_1$ as $\epsilon\rightarrow0$, and set $\delta_i = \delta_1+\widetilde\delta_i$ for $i=1,2$.  Then it becomes clear that 
\begin{align*}
|t\lambda_i - \delta_1| &< \epsilon~(mod~2\pi) \text{ for } i\in\Pi_1\\
|t\lambda_i - \delta_2| &< \epsilon~(mod~2\pi) \text{ for } j\in\Pi_2
\end{align*}
has a solution for $t$, and $\delta_1\not\equiv\delta_2~(mod~2\pi)$ as desired.  This completes the proof of Theorem \ref{thm:kronecker}.
\qed

\begin{remark}
We note that condition 2 of Theorem \ref{thm:kronecker} is a generalization of the eigenvalue condition necessary for PGST, which requires that $\sum_i\ell_i$ be even.  See Lemma 1 of \cite{invol} and Lemma 3 of \cite{vanBommel2016}.
\end{remark}

\section{Paths}
\begin{theorem}
Let $P_n$ denote the path on $n$ vertices labeled $1,...,n$.  Then PGFR occurs in $P_n$ in the following cases:
\begin{enumerate}
\item[(1)] between symmetric pairs of vertices if and only if there is PGST between those points (characterized in \cite{vanBommel2016});
\item[(2)] $n=5\cdot 2^k - 1$ vertices, between vertices $2^k$ and $3\cdot2^k$ (or, by symmetry, between $2\cdot2^k$ and $4\cdot2^k$).
\end{enumerate}
\end{theorem}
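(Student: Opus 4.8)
The plan is to feed the explicit spectrum of $P_n$ into Theorem~\ref{thm:kronecker}. Recall that $A(P_n)$ has the simple eigenvalues $\lambda_j=2\cos\frac{j\pi}{n+1}$, $j=1,\dots,n$, with eigenvectors $\varphi_j$ given by $\varphi_j(v)=\sin\frac{jv\pi}{n+1}$. Because every eigenvalue is simple, fractional cospectrality and strong fractional cospectrality coincide, and a pair $\{a,b\}$ is fractionally cospectral exactly when there is a nonzero constant $C$ with $\varphi_j(a)/\varphi_j(b)\in\{C,-1/C\}$ for all $j$ (with $\varphi_j(a)=0\iff\varphi_j(b)=0$). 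So the proof has two halves: first classify the fractionally cospectral pairs and record the induced partition $\Pi_1\sqcup\Pi_2$ of the eigenvalue indices, then test condition~2 of Theorem~\ref{thm:kronecker} for each of them.

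For the classification I would separate symmetric from asymmetric pairs. If $b=n+1-a$ then $\varphi_j(b)=(-1)^{j+1}\varphi_j(a)$, so the ratio is $\pm1$, the pair is automatically fractionally cospectral with $H=\left[\begin{smallmatrix}\alpha&\beta\\ \beta&\alpha\end{smallmatrix}\right]$, and $\Pi_1$ (resp.\ $\Pi_2$) consists of the odd-indexed (resp.\ even-indexed) $j$ with $\varphi_j(a)\neq0$. For an asymmetric pair I would apply Theorem~\ref{thm:frac_cosp}: the requirement $A^k(a,a)-A^k(b,b)=c\,A^k(a,b)$ for all $k$ with one fixed $c\neq0$ has to be analyzed through the combinatorics of closed walks in a path, or equivalently through the requirement that $\sin\frac{ja\pi}{n+1}\big/\sin\frac{jb\pi}{n+1}$ take exactly two values whose product is $-1$. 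The claim is that this forces $n+1=5\cdot2^k$ and $\{a,b\}=\{2^k,3\cdot2^k\}$ up to the reflection of the path, the mechanism being the pentagonal identity $\sin\frac{j\pi}{5}\big/\sin\frac{3j\pi}{5}\in\{2\cos\tfrac{2\pi}{5},\,-2\cos\tfrac{\pi}{5}\}$ scaled up by the factor $2^k$. I expect this to be the principal obstacle: ruling out every other asymmetric pair amounts to showing that a quotient of two sines at commensurable angles can collapse onto a two-element set of product $-1$ only in the pentagonal case, which will need a genuine case analysis rather than a short argument.

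For symmetric pairs, condition~2 of Theorem~\ref{thm:kronecker} becomes, after the reduction used in its proof, the statement that the subgroup $q\Z=\{\sum_{i\in\Pi_1}\ell_i:\ \sum_i\ell_i\lambda_i=0,\ \sum_i\ell_i=0\}$ is a proper subgroup of $\Z$, and I would show this is equivalent to the number-theoretic criterion governing pretty good state transfer between $a$ and $b$, so that part~(1) follows by invoking \cite{vanBommel2016}. The engine of this step is an analysis of the $\Z$-linear relations among $\{2\cos\frac{j\pi}{n+1}:1\le j\le n\}$, which is classical: writing $2\cos\frac{j\pi}{n+1}=\zeta^j+\zeta^{-j}$ for $\zeta$ a primitive $2(n+1)$-th root of unity, these relations are organized by the Galois action on $\Q(\zeta)$ and by vanishing sums of roots of unity, with their shape dictated by the factorization of $n+1$; from this one extracts that for these spectra the subgroup $q\Z$ is even whenever it is proper, so that ``$q\neq\pm1$'' and the PGST condition single out the same paths. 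Conversely, Lemma~\ref{lem:strcosp} together with the classification above shows that no $n$ outside the two stated families can support PGFR.

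It remains to verify condition~2 for the asymmetric pair $\{2^k,3\cdot2^k\}$ in $P_{5\cdot2^k-1}$. Here the support and the partition $\Pi_1\sqcup\Pi_2$ are explicit (governed by the residue of $j$ modulo $5$, refined by the $2$-adic valuation of $j$), and I would argue by induction on $k$. The even-indexed eigenvalues of $P_{5\cdot2^{k+1}-1}$ are precisely the eigenvalues of $P_{5\cdot2^{k}-1}$, via $2\cos\frac{2j\pi}{5\cdot2^{k+1}}=2\cos\frac{j\pi}{5\cdot2^{k}}$; this identification carries the smaller path's relation lattice into the larger one and matches up the two partitions, so that, once one checks separately that the ``new'' odd-index eigenvalues contribute no offending relation, the claim reduces to the base case $k=0$, namely $P_4$ with the pair $\{1,3\}$, a short finite computation. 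Assembling the symmetric and asymmetric analyses then yields the two families in the theorem.
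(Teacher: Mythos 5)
Your overall strategy coincides with the paper's: classify the fractionally cospectral pairs of $P_n$ and then test condition 2 of Theorem \ref{thm:kronecker} against the resulting partition $\Pi_1\sqcup\Pi_2$ of the (simple) eigenvalues. But as written the proposal defers, or asserts without argument, exactly the steps that carry the content. The most serious is the classification of asymmetric fractionally cospectral pairs, which you explicitly leave open (``will need a genuine case analysis''). This is the heart of part (2): the paper settles it by combining Theorem \ref{thm:frac_cosp} with an explicit reflection-principle count of walks in a path, from which one reads off in succession that the distance between $u$ and $v$ is even, say $2d$, that $u=d$ and $v=3d$, that the constant in the walk identity is $c=-1$, and finally that $n=5d-1$; the converse is then a direct verification of the walk identity for all $k\le n$. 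Nothing in your sine-ratio sketch substitutes for this, and the ``pentagonal identity'' only explains why the known example works, not why no other asymmetric pair can.

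The other two steps also have gaps. For symmetric pairs you claim that the subgroup $q\Z$ is ``even whenever it is proper,'' which is precisely the assertion that PGFR and PGST coincide there; you give no proof. The paper's route is concrete: in every case where \cite{vanBommel2016} rules out PGST, their proof already exhibits a relation with $\sum_i\ell_i\lambda_i=0$, $\sum_i\ell_i=0$ and $\sum_{i\in\Pi_1}\ell_i=\pm1$, which by Theorem \ref{thm:kronecker} rules out PGFR as well. For the pair $\{2^k,3\cdot2^k\}$ in $P_{5\cdot2^k-1}$, your induction on $k$ hinges on the unproved claim that the ``new'' odd-index eigenvalues contribute no offending relation; but an integer relation $\sum_j\ell_j\lambda_j=0$ may mix old and new indices, so the relation lattice does not split along the parity of $j$ without an argument. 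The paper instead writes $\lambda_j=\zeta^j+\zeta^{-j}$ with $\zeta$ a primitive $10d$-th root of unity, uses the structure of vanishing sums of roots of unity (Theorem 2.3 of \cite{lenstra}) to isolate the subsum over $j\equiv1\pmod 5$, and notes that the resulting polynomial is divisible by $\Phi_{2^{k+1}}(x)=x^{2^k}+1$, forcing $\sum_{j\equiv1,4\pmod 5}\ell_j$ to be even. Finally, you do not address the negative direction for $d$ with an odd factor, which the paper obtains from the alternating-cosine identity of Lemma \ref{lem:cos} by constructing an explicit relation with $\sum_{j\in\Pi_1}\ell_j=1$.
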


For part (1) of this theorem, clearly if there is PGST between two vertices, then there is PGFR.  It remains to see that when there is not PGST between symmetric pairs of nodes, then there is not PGFR either.  It is clear that any symmetric pair of vertices in a path are strongly cospectral (see Lemma 2 of \cite{vanBommel2016}), so we need to see if the eigenvalues condition of Theorem \ref{thm:kronecker} fails when there is not PGST. By the work in \cite{vanBommel2016} where PGST in paths is characterized, in cases where there is no PGST, this is shown construction of an integer linear combination of the eigenvalues of the adjacency matrix of a path with $\sum_i\ell_i$ odd (see our remark after the proof of Theorem \ref{thm:kronecker}).  Examining the proof of Theorem 4 of \cite{vanBommel2016}, in fact, the linear combination constructed has $\sum_i\ell_i = \pm1$.  So by Theorem \ref{thm:kronecker}, \cite{vanBommel2016} has actually proven that there is no PGFR between these vertices.  

Thus we only need concern ourselves with part (2) of our theorem.  The remainder of this section is dedicated to the proof.  We will first investigate when two non-symmetric vertices of a path can be fractionally cospectral.  For these nodes, we will then investigate the eigenvalue condition of Theorem \ref{thm:kronecker}.

\subsection{Fractional cospectrality}\label{sec:cosp}

The goal of this section is to see when a pair of vertices in a path can be fractionally cospectral.  We will primarily be using the walk count characterization of fractional cospectrality from Theorem \ref{thm:frac_cosp}.  Thus, it will be helpful to have some understanding of the combinatorics of walk counts of a path.  With this in mind, we give the following lemma.

\begin{lemma}\label{lem:walk}
Let $P_n$ be the path with $n$ vertices labeled $1,...,n$.  Let $x<y$ be two vertices of $P_n$ and without loss of generality, let us assume that $x$ is closer to 1 than to $n$.  Then we have the following.
\begin{enumerate}
\item $A^k(x,y) = 0$ if $k\not\equiv y-x \mod 2$.
\item $\displaystyle A^k(x,y) = \binom{k}{\frac{k-(y-x)}{2}}-\binom{k}{\frac{k-(y+x)}{2}}$ if $k\equiv y-x\mod 2$ and $k\leq 2n-(x+y)$.
\end{enumerate}
\end{lemma}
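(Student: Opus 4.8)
The plan is to count walks of length $k$ from $x$ to $y$ in $P_n$ directly by a reflection argument. Identify the path $P_n$ with the integer segment $\{1,\dots,n\}$, so a walk of length $k$ from $x$ to $y$ is a lattice path with $\pm 1$ steps that starts at $x$, ends at $y$, takes $k$ steps, and never leaves the interval $[1,n]$. Part (1) is immediate from parity: each step changes the position by an odd amount, so after $k$ steps the position has changed by something congruent to $k \pmod 2$; hence if $k \not\equiv y-x \pmod 2$ there are no such walks and $A^k(x,y) = 0$. For part (2), assume $k \equiv y - x \pmod 2$, so the relevant binomial coefficients have integer arguments.

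For part (2), first I would count unrestricted $\pm 1$ walks from $x$ to $y$ of length $k$: if the walk takes $u$ up-steps and $d$ down-steps, then $u + d = k$ and $u - d = y - x$, so $u = \tfrac{k + (y-x)}{2}$ and $d = \tfrac{k-(y-x)}{2}$, giving $\binom{k}{(k-(y-x))/2}$ walks on all of $\Z$. Now I must subtract those that violate a boundary constraint. The hypothesis $k \le 2n - (x+y)$, together with the assumption that $x$ is closer to $1$ than to $n$ (equivalently $x - 1 \le n - x$, and one checks the analogous slack at $y$), is designed precisely so that no walk of length $k$ can reach the \emph{upper} wall $n$: a walk reaching $n$ and returning toward the region near $x,y$ would need at least $(n-x) + (n-y) = 2n - (x+y)$ steps, which exceeds $k$ unless the walk has no room to do anything else — a case the strict structure rules out. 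So the only forbidden walks are those that touch $0$ (i.e.\ step below vertex $1$). By the classical reflection principle, the number of $\pm 1$ walks from $x$ to $y$ of length $k$ that touch $0$ equals the number of unrestricted walks from $-x$ to $y$ of length $k$, which is $\binom{k}{(k-(y+x))/2}$ (here $u - d = y - (-x) = y + x$ gives $d = \tfrac{k - (y+x)}{2}$). Subtracting yields the stated formula $\binom{k}{\frac{k-(y-x)}{2}} - \binom{k}{\frac{k-(y+x)}{2}}$.

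The main obstacle is making the boundary bookkeeping fully rigorous: I need to verify carefully that under the hypotheses ($x$ closer to $1$ than to $n$, $x < y$, and $k \le 2n-(x+y)$) a walk of length $k$ that starts at $x$ and ends at $y$ genuinely cannot touch the wall at $n$, so that reflecting only at the single wall $0$ is exact and there is no inclusion–exclusion correction for paths bouncing between both walls. The key inequality is that any walk touching $n$ uses at least $(n-x)$ steps to get there and at least $(n-y)$ steps to come back to $y$, for a total of at least $2n - x - y \ge k$, and one must check the edge case of equality (where the walk is forced and, combined with parity and $x<y$, still cannot close up) to conclude strict impossibility; alternatively, one notes $k \le 2n - (x+y) < 2n - 2x \le 2(n-x)$ is not quite enough and the clean statement is that the reflection at $0$ never produces a path that would itself need correcting at $n$ within $k$ steps. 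Once this localization to a single wall is established, the reflection principle applies verbatim and the computation of the two binomial coefficients via the up/down-step count is routine.
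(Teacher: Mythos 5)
Your argument is correct and is essentially the paper's proof: count unrestricted $\pm 1$ walks, then subtract a single reflection term for the left wall, with the hypothesis $k\le 2n-(x+y)$ guaranteeing the right endpoint never interferes (the paper reflects the tail of the walk after its first visit to $0$, producing walks from $x$ to $-y$, while you reflect the head, producing walks from $-x$ to $y$; the counts agree). The boundary worry you raise resolves itself once you observe that the forbidden event at the right end is stepping to position $n+1$, not merely visiting the vertex $n$: such a walk needs at least $(n+1-x)+(n+1-y)=2n-(x+y)+2>k$ steps, so the impossibility is strict, there is no edge case of equality, and no two-wall inclusion--exclusion is needed.
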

\begin{proof}
The first item is clear since a path is bipartite.

For the second, we wish to count walks starting at $x$ and ending at $y$. Let us consider walks on a path extended to the left out to length $x-k$.  We will first count the walks in this path starting at $x$ and ending at $y$ in this extended path, and then we will subtract off the walks that used the vertex 0 at some point (so the steps to the left exceed the endpoint of the original path).  First, to go from $x$ to $y$, the number of steps to the right must exceed the number of steps to the left by exactly $y-x$.  There are $k$ steps total, so the number of steps to the left is $(k-(y-x))/2$.  Among the $k$ steps, once we have determined which steps are to the left, the walk is determined.  Thus there are $\binom{k}{(k-(y-x))/2}$ ways of choosing this.

Now we enumerate the number of these walks that use the vertex 0.  We will use a standard reflection technique coming from the combinatorics of Catalan numbers. By switching the left/right moves of the walk after the first time the walk reaches 0, the walks we wish to count are in bijective correspondence with walks starting at $x$ and ending at $-y$.  By reasoning similar to the above, we find that the number of steps to the right in such a walk must be $(k-(x+y))/2$, so as above, we obtain that the total number of such walks is $\binom{k}{(k-(y+x))/2}$.  The restriction on $k$ implies that there is no obstruction coming from the right endpoint $n$.  This gives the lemma.
\end{proof}

We remark that if $k<y-x$, there is no walk of length $k$ from $x$ to $y$, and the binomial coefficients above have negative denominator, and we interpret $\binom{k}{j}=0$ for $j<0$, so the formula is still correct.  Likewise, if $k<y+x$, then no walk of length $k$ has any chance of exceeding the endpoint, so the walk count is given by just the first term, and the amount subtracted off is 0 for the same reason.

We will record as a corollary the special case of this when $y=x$, so that we are counting closed walks.

\begin{corollary}\label{cor:closed_walk}
Let $x$ be any vertex of a path $P_n$ as above.
\begin{enumerate}
\item $A^{2r+1}(x,x)=0$ for all $r$.
\item $\displaystyle A^{2r}(x,x)=\binom{2r}{r} - \binom{2r}{r-x}$ for $r\leq n-x$.
\end{enumerate}  
\end{corollary}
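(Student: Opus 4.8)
The plan is to obtain Corollary~\ref{cor:closed_walk} as the $y=x$ specialization of Lemma~\ref{lem:walk}. Concretely, I would read that lemma with $y$ replaced by $x$, so that $y-x=0$ and $y+x=2x$, and then simplify the resulting binomial expressions. No new combinatorial argument is needed; everything is already packaged in Lemma~\ref{lem:walk}.

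Carrying this out: for part~(1), since $y-x=0$ is even, item~(1) of Lemma~\ref{lem:walk} forces $A^k(x,x)=0$ for every odd $k$, i.e.\ $A^{2r+1}(x,x)=0$ for all $r$. For part~(2), take $k=2r$; the hypothesis $k\le 2n-(x+y)$ becomes $2r\le 2n-2x$, i.e.\ $r\le n-x$, and item~(2) of Lemma~\ref{lem:walk} then reads
\[
A^{2r}(x,x)=\binom{2r}{\tfrac{2r-0}{2}}-\binom{2r}{\tfrac{2r-2x}{2}}=\binom{2r}{r}-\binom{2r}{r-x},
\]
which is exactly the claimed formula. I would also point out (echoing the remark after Lemma~\ref{lem:walk}) that when $x>r$ the convention $\binom{2r}{r-x}=0$ keeps the identity valid, since then no closed walk of length $2r$ can reach the left endpoint.

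The only point requiring a word of care is that Lemma~\ref{lem:walk} is phrased for $x<y$ with $x$ the endpoint nearer to vertex~$1$, whereas here $x=y$. I expect this to be the sole (and minor) obstacle, and I would dispose of it in one sentence: the reflection argument proving Lemma~\ref{lem:walk} applies verbatim with $x=y$ (walks of length $2r$ from $x$ to $x$, minus those touching vertex~$0$, which biject with walks from $x$ to $-x$), and the bound $r\le n-x$ ensures no walk of length $2r$ reaches the far endpoint, so there is no correction term from the right. Alternatively, using the automorphism $i\leftrightarrow n+1-i$ of $P_n$ one may always assume $x\le (n+1)/2$. Either way the corollary is an immediate consequence, and I would keep its proof to essentially a single line invoking Lemma~\ref{lem:walk}.
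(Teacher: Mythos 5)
Your proposal is correct and matches the paper exactly: the corollary is stated there as the direct $y=x$ specialization of Lemma~\ref{lem:walk}, with no separate argument given, and your simplification of the binomials and of the hypothesis $k\le 2n-(x+y)$ to $r\le n-x$ is precisely what is intended. Your one-sentence note that the reflection argument applies verbatim when $x=y$ is a reasonable extra care point but changes nothing.
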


Again, remark that for $r<x$, there is no obstruction from the endpoint, and the second binomial coefficient is 0.

\begin{theorem}
Non-symmetric vertices $u$ and $v$ in a path $P_n$ are strongly fractionally cospectral if and only if $n=5d-1$ for some positive integer $d$, and $\{u,v\}=\{d,3d\}$ or (by symmetry) $\{u,v\}=\{2d,4d\}$.
\end{theorem}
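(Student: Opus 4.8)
The plan is to use the walk-count characterization of fractional cospectrality (Theorem~\ref{thm:frac_cosp}) together with the explicit closed-walk formulas from Corollary~\ref{cor:closed_walk}. By Theorem~\ref{thm:frac_cosp}, vertices $u$ and $v$ are fractionally cospectral with respect to some non-diagonal symmetric unitary $H$ if and only if there is a constant $c = \frac{p}{q} - \frac{q}{p}$ with
\[
A^k(u,u) - A^k(v,v) = c\, A^k(u,v) \quad\text{for all } k.
\]
Writing $u = x$, $v = y$ with $x < y$ and $x$ closer to $1$ (as in Lemma~\ref{lem:walk}), set $s = x+y$ and $d = y-x$. By Corollary~\ref{cor:closed_walk} the left-hand side vanishes for odd $k$, and for $k = 2r$ small enough it equals $\binom{2r}{r-x} - \binom{2r}{r-y}$ (the $\binom{2r}{r}$ terms cancel), while $A^{2r}(u,v)$ vanishes unless $d$ is even; since $u,v$ are non-symmetric we cannot have $x+y = n+1$, and a parity check on $k \equiv d \bmod 2$ versus $k$ even forces $d$ to be even, so $d = 2m$ and $s$ is even as well. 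For small $r$ the identity becomes
\[
\binom{2r}{r-x} - \binom{2r}{r-y} = c\left[\binom{2r}{r-m} - \binom{2r}{r - (y+x)/2 \cdot(\dots)}\right],
\]
i.e. a linear relation among four binomial coefficients $\binom{2r}{r-a}$ that must hold for all $r$ in a range.

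The key step is an asymptotic/leading-term argument: the central binomial coefficients $\binom{2r}{r-a}$, as $r\to\infty$ with $a$ fixed, all have the same leading asymptotics $\sim \binom{2r}{r}e^{-a^2/r}$, and their differences are governed by the sizes of the shifts $|a|$. Comparing the smallest shifts appearing on each side of the identity forces the multiset of shifts $\{x, y\}$ on the left to match (up to the constant $c$ and the analogous shifts) the multiset on the right. More concretely, expanding $\binom{2r}{r-a} - \binom{2r}{r-b}$ and using that $\binom{2r}{r-a}/\binom{2r}{r} \to 1$ one extracts, from the first non-trivial order, polynomial identities in $r$ whose coefficients involve symmetric functions of $x, y$ and of the shifts on the right-hand side. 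I expect this to pin down $c$ and force a relation of the form $y + x$ and $y - x$ being in a fixed ratio; working out that ratio is where the specific answer $n = 5d-1$, $\{u,v\} = \{d,3d\}$ should emerge — one finds the shifts must be $\{d, 3d\}$ against $\{2d, (\text{reflected})\}$, with $c = \pm 1$ (so $p/q$ is a root of $t^2 \mp t - 1$), and the constraint $k \le 2n - (x+y)$ needed for the formula to be valid translates into $n = 5d - 1$ exactly so that enough values of $r$ are available to force the identity globally.

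**The main obstacle.** The hardest part will be turning the "all $k$" requirement into a finite, checkable condition and then rigorously extracting the relation among $x$, $y$, $n$ from the binomial identity. Lemma~\ref{lem:walk} only gives the clean formula for $k \le 2n - (x+y)$; for larger $k$ the reflection off the right endpoint contributes, so one must either (a) argue that agreement on a sufficiently long initial segment of $k$ already forces fractional cospectrality (using that the relevant walk-generating functions are rational of controlled degree), or (b) push through the full formula including the second reflection term. I would take route (a): the closed-walk generating function $\sum_k A^k(x,x) t^k$ is a rational function whose denominator is the characteristic polynomial of $P_n$, so the sequence $A^k(u,u) - A^k(v,v) - c A^k(u,v)$ satisfies a linear recurrence of order $n$, and hence vanishes identically as soon as it vanishes for $k = 0, 1, \dots, O(n)$ — and in that range the explicit binomial formulas apply. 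Once that reduction is in place, the remaining work is the (still delicate but elementary) analysis showing the binomial identity on that range holds precisely in the claimed configuration; the non-symmetry hypothesis is used to discard the cospectral-pair solution $x + y = n+1$, and simplicity of the spectrum of $P_n$ upgrades fractional cospectrality to strong fractional cospectrality for free.
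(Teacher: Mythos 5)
Your setup is right: the walk-count criterion of Theorem \ref{thm:frac_cosp}, the parity argument forcing $\mathrm{dist}(u,v)$ to be even, the reduction to finitely many $k$ via the order-$n$ linear recurrence (Cayley--Hamilton), and the observation that the simple spectrum of $P_n$ upgrades fractional to strong fractional cospectrality are all correct and are exactly what is needed. But the heart of the theorem --- actually deriving $u=d$, $v=3d$, $c=-1$, and $n=5d-1$ from the identity $A^k(u,u)-A^k(v,v)=c\,A^k(u,v)$ --- is not carried out: you write ``I expect this to pin down $c$'' and leave a literal placeholder $(\dots)$ inside one of your binomial coefficients. Moreover, the asymptotic device you propose ($\binom{2r}{r-a}\sim\binom{2r}{r}e^{-a^2/r}$ as $r\to\infty$) cannot be used here, because $r$ is bounded: the clean binomial formulas are valid only for $2r\le 2n-(x+y)$, and for larger $r$ further reflection terms enter, so there is no asymptotic regime to exploit.

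What actually closes the argument, and is more elementary than what you sketch, is a ``first discrepancy'' comparison at \emph{small} $k$ rather than large $k$. The smallest $k$ with $A^k(u,v)\neq 0$ is $k=\mathrm{dist}(u,v)=2d$, while the smallest $k$ with $A^k(u,u)\neq A^k(v,v)$ is $k=2\,\mathrm{dist}(u,1)+2=2u$ (the first closed-walk length affected by the nearer endpoint); the identity forces these to coincide, giving $u=d$ and hence $v=3d$. At $k=2d$ the discrepancy is exactly one walk on each side, which forces $c=-1$. Then evaluating the identity at lengths $2d+2j$ for $j<d$ shows the right endpoint must be at distance at least $2d-1$ from $v$ (so $n\ge 5d-1$), and evaluating at length $4d$ shows it is at most $2d-1$ (so $n=5d-1$). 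Finally, you still owe the converse: for $n=5d-1$, $u=d$, $v=3d$ one must verify the identity for all $k\le n$ (which your recurrence reduction does justify restricting to), using the explicit formulas at lengths $4d+2j$, $j<d$. Without these computations the proposal is a plan, not a proof.
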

\begin{proof}
First, let us suppose that $u$ and $v$ are vertices of $P_n$ that are fractionally cospectral, but not cospectral (i.e. not symmetric in the path). We will repeatedly make us of the walk count characterization of Theorem \ref{thm:frac_cosp},
\begin{equation}\label{walks}
A^k(u,u)=A^k(v,v)+cA^k(u,v)
\end{equation}
for some constant $c$.  Since $u$ and $v$ are not cospectral, then  $c\neq0$.

\begin{claim}
The distance from $u$ to $v$ is even.
\end{claim}
\begin{proof}
For $k$ odd, $A^k(u,u)=A^k(v,v)=0$.  It follows form (\ref{walks}) that $A^k(u,v)=0$ for all odd $k$.  But when $k$ is the distance from $u$ to $v$, then $A^k(u,v)$ is non-zero.  Thus the distance is even.
\end{proof}

Recall that we are labeling the vertices of $P_n$ with $1,...,n$.  Without loss of generality, assume that $u<v$ and that $u$ is closer to the endpoint 1 than $v$ is to the endpoint $n$.  From the claim above, assume that the distance from $u$ to $v$ is $2d$.

\begin{claim}
Under these assumptions, it must be that $u=d$ and $v=3d$.
\end{claim}
\begin{proof}
By (\ref{walks}) we have $A^k(u,u) = A^k(v,v)$ for all $k<2d$, and we must have $A^k(u,u)\neq A^k(v,v)$ for $k=2d$.  Since $u$ is closer to an endoint of the path than $v$, then clearly, the first place $A^k(u,u)$ and $A^k(v,v)$ differ are at $k=2d(u,1)+2$ (since the endpoint closest to $u$ is 1).  Thus, this value of $k$ must be $2d$, so $u$ must be distance $d-1$ from 1, so $u=d$.  Since the distance from $u$ to $v$ is $2d$, it follows that $v=3d$.
\end{proof}

\begin{claim}\label{claim:c}
In (\ref{walks}), we must have $c=-1$.
\end{claim}
\begin{proof}
Since the distance from $u$ to 1 is $d-1$, there is one more closed walk of length $2d$ at $v$ than at $u$, and there is exactly 1 walk of length $2d$ from $u$ to $v$ since that is the distance between them.  Thus
$A^{2d}(u,u)=A^{2d}(v,v)-A^{2d}(u,v)$, and the claim follows.
\end{proof}

\begin{claim}
$n\geq5d-1$.
\end{claim}
\begin{proof}
We will look at walks of length $k=2d+2j$. We can apply Lemma \ref{lem:walk} to any $k\leq 2n-4d$, or in other words, to any $j$ with $j\leq n-3d$. Note that since $u$ is closer to $1$ than $v$ is to $n$, then $n > 4d$, thus Lemma \ref{lem:walk} applies to any $j<d$. So for any $j<d$, from Corollary \ref{cor:closed_walk} and Lemma \ref{lem:walk}, we have
\begin{align*}
A^{2d+2j}(u,u) &= \binom{2d+2j}{d+j}-\binom{2d+2j}{j}\\
A^{2d+2j}(u,v) &= \binom{2d+2j}{j}.
\end{align*}
Then from (\ref{walks}), since $c=-1$, we deduce that \[A^{2d+2j}(v,v)=\binom{2d+2j}{d+j}.\]  This implies that the distance from $v$ to the endpoint is at least $2d-1$, which implies $n\geq5d-1$, since $v=3d$.
\end{proof}

\begin{claim}
$n=5d-1$.
\end{claim}
\begin{proof}
Again using Corollary \ref{cor:closed_walk} and Lemma \ref{lem:walk}, we have
\begin{align*}
A^{4d}(u,u) &= \binom{4d}{2d} - \binom{4d}{d}\\
A^{4d}(u,v) &= \binom{4d}{2d} - \binom{4d}{0} = \binom{4d}{2d} - 1.
\end{align*}
Then (\ref{walks}) implies 
\[
A^{4d}(v,v) = \binom{4d}{d}-1.
\]
This implies, in particular, there is no path of length $2d$ from $v$ to its closest endpoint.  We already know that $v$ is at distance $3d-1$ from 1, so the distance from $v$ to $n$ is at most $2d-1$.  Thus we conclude $n=5d-1$.
\end{proof}

With these claims, we have shown that if $u$ and $v$ exhibit fractional cospectrality and are not cospectral, then this must be the situation we have claimed. 

It remains to prove the converse, namely that when $n=5d-1$, $u=d$, and $v=3d$, then $u$ and $v$ are fractionally cospectral.  We do this by verifying (\ref{walks}) for all $k$.  It suffices to verify it for $k\leq n$. Note that we have already verified this for $k\leq4d$, and that $c=-1$.  The verification for $4d<k\leq5d-1$ is a straightforward application of Lemma \ref{lem:walk} and Corollary \ref{cor:closed_walk}.  Indeed, for $j<d$,
\begin{align*}
A^{4d+2j}(u,u) &= \binom{4d+2j}{2d+j}-\binom{4d+2j}{d+j}\\
A^{4d+2j}(v,v) &= \binom{4d+2j}{2d+j}-\binom{4d+2j}{j}\\
A^{4d+2j}(u,v) &= \binom{4d+2j}{d+j} - \binom{4d+2j}{j}
\end{align*}
and (\ref{walks}) is verified, completing the proof.
\end{proof}

\subsection{Eigenvalue condition}

In this section, we wish to determine when a path on $5d-1$ vertices satisfies the second condition of Theorem \ref{thm:kronecker}.  As the eigenvalues of paths involve cosines, the following lemma from \cite{vanBommel2016} will be helpful.

\begin{lemma}[Lemma 5 of \cite{vanBommel2016}]\label{lem:cos}
Let $m$ be an odd integer, $0\leq a< k$ integers.  Then
\[
\sum_{i=0}^{m-1}(-1)^i\cos\left(\frac{(a+ik)\pi}{km}\right)=0.
\]
\end{lemma}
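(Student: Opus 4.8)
The plan is to prove this as a pure trigonometric identity, by passing to complex exponentials and recognizing a geometric series; the integrality of $a$ and $k$ will play no role here, only $k\neq 0$ and the parity of $m$ matter. Set $\theta=\frac{a\pi}{km}$ and $\delta=\frac{\pi}{m}$, so that the left-hand side is $S:=\sum_{i=0}^{m-1}(-1)^i\cos(\theta+i\delta)$. Since each coefficient $(-1)^i$ is real, $S=\operatorname{Re}\left(\sum_{i=0}^{m-1}(-1)^i e^{\ii(\theta+i\delta)}\right)=\operatorname{Re}\left(e^{\ii\theta}\sum_{i=0}^{m-1}r^i\right)$, where $r:=-e^{\ii\delta}=e^{\ii\pi(m+1)/m}$.

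First I would dispose of the common ratio. For $m\geq 2$ we have $r\neq 1$, since the argument $\pi(m+1)/m=\pi+\pi/m$ lies strictly between $0$ and $2\pi$; hence $\sum_{i=0}^{m-1}r^i=\frac{r^m-1}{r-1}$. The one place the hypothesis enters is the computation $r^m=e^{\ii\pi(m+1)}=(-1)^{m+1}$, which equals $1$ precisely because $m$ is odd. Therefore $r^m-1=0$, the geometric sum vanishes, and so $S=0$. (For the excluded value $m=1$ the stated identity is false in general — the sum is just $\cos(a\pi/k)$ — but the lemma is only ever applied with odd $m\geq 3$, so this causes no difficulty.)

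I do not anticipate a genuine obstacle: the whole argument collapses to a two-line evaluation once the sum is rewritten in exponential form, and the only point needing a moment's care is checking that the denominator $r-1$ is nonzero, i.e.\ that we are not in the degenerate case $m=1$. If one wished to avoid complex numbers altogether, an alternative would be to telescope the alternating sum using the product-to-sum identity $2\sin(\delta/2)\cos(\theta+i\delta)=\sin(\theta+i\delta+\delta/2)-\sin(\theta+i\delta-\delta/2)$; this works but is more bookkeeping, and the geometric-series route is cleaner and makes transparent exactly why the oddness of $m$ is the crucial hypothesis.
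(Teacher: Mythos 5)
Your proof is correct. The paper does not prove this lemma at all---it is imported verbatim from van Bommel et al.\ as a known result---so there is no in-paper argument to compare against; your geometric-series computation with ratio $r=-e^{\ii\pi/m}$ and the key step $r^m=(-1)^{m+1}=1$ is the standard and right way to verify it. Your side remark about $m=1$ is also accurate: the identity as literally stated fails there (the sum reduces to $\cos(a\pi/k)$), but every application in this paper takes $m\geq 3$ odd, so nothing is affected.
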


\begin{theorem}
A path on $5d-1$ vertices satisfies the eigenvalue condition of Theorem \ref{thm:kronecker} if and only if $d=2^k$.
\end{theorem}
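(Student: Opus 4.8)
The plan is to translate the eigenvalue condition of Theorem~\ref{thm:kronecker} into a concrete Diophantine statement about the cosine eigenvalues of $P_{5d-1}$, and then reduce it to a cyclotomic/prime-factorization argument. The eigenvalues of $P_n$ are $2\cos\bigl(\tfrac{j\pi}{n+1}\bigr)$ for $j=1,\dots,n$, so here $n+1 = 5d$ and the eigenvalues are $\lambda_j = 2\cos\bigl(\tfrac{j\pi}{5d}\bigr)$. First I would recall (or rederive) the standard fact, used in the PGST literature, that an integer linear combination $\sum_j \ell_j \lambda_j$ vanishes if and only if it is a $\Z$-combination of the ``Lemma~\ref{lem:cos}'' relations: writing $5d = km$ with $m$ odd maximal (so $k = 2^k$ when $d = 2^k$, and more generally $k$ is the $2$-part of $5d$ times a factor depending on how $5$ sits), the relations $\sum_{i=0}^{m-1}(-1)^i\cos\bigl(\tfrac{(a+ik)\pi}{km}\bigr) = 0$ generate the full lattice of vanishing integer combinations. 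Then I would determine, from the fractional-cospectrality analysis of the previous subsection, exactly which index sets $\Pi_1$ and $\Pi_2$ the eigenvalues split into — for the pair $(d,3d)$ this should be governed by the sign pattern of the eigenvector entries at positions $d$ and $3d$, i.e.\ by the parity of $\lfloor j \cdot \text{something}/5 \rfloor$ or a similar explicit residue condition mod~$5$ and mod~$2$.

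With $\Pi_1,\Pi_2$ identified explicitly, condition~2 becomes: no $\Z$-combination of the Lemma~\ref{lem:cos} relations (subject also to $\sum_j \ell_j = 0$, which is automatic for each individual relation since $m$ is odd and the relation has $m$ terms with alternating signs summing to $\pm 1$ — so I must be careful here, a single relation contributes $\sum \ell_i = \pm 1$, not $0$, meaning the $\sum \ell_i = 0$ constraint forces combinations of \emph{pairs} of relations) has $\sum_{i\in\Pi_1}\ell_i = \pm 1$. So the real task is: compute, for each generating relation, the quantity $\sum_{i\in\Pi_1}\ell_i$ modulo the subgroup generated by all the others, and show the resulting subgroup $\mathcal{S}\subseteq\Z$ (as in the proof of Theorem~\ref{thm:kronecker}) equals $q\Z$ with $q \neq \pm 1$ precisely when $d$ is a power of $2$. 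I expect that when $d = 2^k$ the arithmetic collapses so that every relation puts an even number (or a fixed multiple of some $q>1$) of its nonzero $\ell_i$ into $\Pi_1$, while if $d$ has an odd prime factor $p \neq 5$ (or the factor $5$ appears to a power, or $d$ has the ``wrong'' shape) one can build a relation — mimicking Theorem~4 of \cite{vanBommel2016} — realizing $\sum_{i\in\Pi_1}\ell_i = \pm 1$.

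The two directions split naturally. For the ``only if'' direction I would assume $d$ is not a power of $2$, write $d = p^a \cdot d'$ with $p$ an odd prime (handling $p=5$ and $p\neq 5$ as separate but parallel cases, since $5$ already divides $n+1=5d$), choose $m$ in Lemma~\ref{lem:cos} to be divisible by $p$, and explicitly exhibit a relation — or a difference of two shifted relations to kill $\sum\ell_j$ — whose $\Pi_1$-sum is $\pm 1$; this is essentially porting the known PGST-failure construction and checking it lands on $\pm 1$ rather than merely an odd number, exactly as the authors note was already implicit in \cite{vanBommel2016}. For the ``if'' direction, with $d = 2^k$ so $5d = 5\cdot 2^k$, the odd part is $m = 5$, the relations have length $5$, and $k = 2^k$; here I would show that the index set $\Pi_1$ intersects the support $\{a, a+k, a+2k, a+3k, a+4k\}$ of each generating relation in a way that makes $\sum_{i\in\Pi_1}(-1)^i$ equal to $0$ or $\pm$(an even number), forcing $\mathcal S \subseteq 2\Z$, hence $q$ even, hence $q \neq \pm 1$ — which by Theorem~\ref{thm:kronecker} gives PGFR. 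The main obstacle I anticipate is the bookkeeping for $\Pi_1$: pinning down the precise residue description of which eigenvalue indices $j$ land in $\Pi_1$ versus $\Pi_2$ for the asymmetric pair $(d,3d)$ (the eigenvectors of $P_n$ have entries $\sin\bigl(\tfrac{ij\pi}{n+1}\bigr)$, so I need the sign of $\sin\bigl(\tfrac{dj\pi}{5d}\bigr) = \sin\bigl(\tfrac{j\pi}{5}\bigr)$ versus $\sin\bigl(\tfrac{3dj\pi}{5d}\bigr) = \sin\bigl(\tfrac{3j\pi}{5}\bigr)$ and their ratio), and then verifying that this residue pattern interacts with the length-$5$ alternating relations to always produce an even $\Pi_1$-count — that parity computation mod~$5$ and mod~$2$ simultaneously is where the argument will be most delicate, and where a clean lemma isolating ``$\sum_{i\in\Pi_1}\ell_i$ is even for every Lemma~\ref{lem:cos} relation iff $d=2^k$'' would carry the whole proof.
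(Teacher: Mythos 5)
Your setup matches the paper's: the eigenvalues are $\lambda_j = 2\cos(j\pi/5d)$, the grouping is read off from the signs of $\sin(j\pi/5)$ and $\sin(3j\pi/5)$ (giving $Z = \{j \equiv 0\}$, $\Pi_1 = \{j \equiv 1,4\}$, $\Pi_2 = \{j\equiv 2,3\}$ mod $5$), and your ``only if'' direction --- take $d$ with an odd factor, use two shifted Lemma~\ref{lem:cos} relations with opposite signs to force $\sum_j \ell_j = 0$ while landing $\sum_{\Pi_1}\ell_j = \pm 1$ --- is exactly what the paper does. Your parity observation for the ``if'' direction is also correct as far as it goes: when $d = 2^k$ the five indices $a + i\cdot 2^k$ of a length-$5$ relation hit each residue class mod $5$ exactly once, so exactly two of them lie in $\Pi_1$ and the $\Pi_1$-sum of $\pm1$'s is even.

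The genuine gap is the load-bearing claim you flag as a ``standard fact to recall or rederive'': that the Lemma~\ref{lem:cos} relations generate the entire $\Z$-lattice of vanishing integer combinations of the $\lambda_j$. Without that, knowing each generator has even $\Pi_1$-sum tells you nothing about an arbitrary relation, and this generation statement is precisely the hard content of the ``if'' direction --- it is not proved anywhere in your outline, and it does not follow from Lemma~\ref{lem:cos} itself (which only supplies \emph{some} relations). The paper sidesteps it entirely: given an arbitrary vanishing combination, it rewrites $\sum_j \ell_j\lambda_j = 0$ as a vanishing sum of $10d$-th roots of unity $\sum_j \ell_j\zeta^j = 0$ with the symmetry $\ell_j = \ell_{10d-j}$, invokes Lenstra's theorem to conclude the sum vanishes separately on each residue class mod $5$, and then observes that the $j\equiv 1$ piece, viewed as a polynomial $P(x)$ in $x=\zeta^5$, is divisible by $\Phi_{2^{k+1}}(x) = x^{2^k}+1$, whence $P(1) = \sum_{j\equiv 1,4 \bmod 5}\ell_j$ is even. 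To repair your route you would need to prove the generation lemma (essentially the R\'edei--de Bruijn--Schoenberg structure theorem for vanishing sums of roots of unity, specialized to $10d = 5\cdot 2^{k+1}$ and symmetrized to cosines), which is comparable in difficulty to the paper's argument; as written, the ``if'' direction is incomplete.
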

\begin{proof}
Let $P_{5d-1}$ be the path with vertex set $1,...,5d-1$.  We know from the previous section that  vertices $d$ and $3d$ are strongly fractionally cospectral.  

We will first determine the grouping on the eigenvalues defined by the fractional cospectrality.  Let $x_1,...,x_{5d-1}$ be the eigenvectors for $P_{5d-1}$.  Since we already know fractional cospectrality, then we have that each $x_k$ satisfies either $x_k(d) = x_k(3d) = 0$, $x_k(d)/x_k(3d) = c$, or $x_k(d)/x_k(3d) = -1/c$ for some $c$.  We will call these three groups the ``zero group"  $Z$, and groups I and II respectively.

It is well-known (see for instance \cite{brouwer2011spectra}) that the eigenvalues of $P_{5d-1}$ are
\[
\lambda_j=2\cos\frac{\pi j}{5d},~~~~j=1,...,5d-1
\]
and the eigenvector $x_j$ for $\lambda_j$ can be given by
\[
x_j(k) = \sin\frac{\pi kj}{5d},~~~~k=1,...,5d-1
\]
for all $j$.  We are interested in the vertices $k=d$ and $k=3d$, so we have $x_j(d) = \sin(\pi j/5)$ and $x_j(3d) = \sin(3\pi j/5)$.  Both of these are 0 if $j$ is divisible by 5. Observe that if $j\equiv 1\mod 5$ or $j\equiv 4\mod 5$, then both $\sin(\pi j/5)$ and $\sin(3\pi j/5)$ are positive.  If$j\equiv 2\mod 5$ or $j\equiv 3\mod 5$, then $\sin(\pi j/5)$ is positive and $\sin(3\pi j/5)$ is negative.  Thus the three groups must be as follows:
\begin{align*}
Z &= \{\lambda_j : j\equiv 0\mod 5\}\\
I&= \{\lambda_j:j\equiv 1,4 \mod 5\}\\
II&=\{\lambda_j:j\equiv 2,3 \mod 5\}.
\end{align*}
The $Z$ group are not in the support of $d$ and $3d$ so these eigenvalues may be ignored.  Thus, the eigenvalue condition becomes that pretty good fractional revival occurs between $d$ and $3d$ if and only if
\begin{align*}
\sum_{\substack{j\\j\not\equiv0\mod 5}}\ell_j\lambda_j &= 0\\
\sum_{\substack{j\\j\not\equiv0\mod 5}}\ell_j &=0
\end{align*}
implies 
\[
\sum_{\substack{j\\j\equiv 1,4\mod 5}}\ell_j \neq \pm1.
\]
We wish to determine the values of $d$ for which this holds.  Suppose $d$ is divisible by some odd number.  Write $d=rm$ where $m$ is odd.  Then 
\[
\lambda_j = 2\cos\frac{\pi j}{5rm}.
\]
By Lemma \ref{lem:cos}, we have (for instance) that
\[
\sum_{i=0}^{m-1}(-1)^i\cos\left(\frac{(1+5ri)\pi}{5rm}\right)=0
\]
and
\[
\sum_{i=0}^{m-1}(-1)^i\cos\left(\frac{(2+5ri)\pi}{5rm}\right)=0
\]
and each term in these sums is a $\lambda_j$ with $j\equiv1\mod 5$ in the first case, and $j\equiv2\mod5$ in the second case.  Then choose the $\ell_{1+5ri}=(-1)^i$, $\ell_{2+5ri}=-(-1)^i$, and $\ell_j=0$ otherwise.  Then since $m$ is odd, we have
\begin{align*}
\sum_j\ell_j\lambda_j = \sum_{i=0}^{m-1}(-1)^i\cos\left(\frac{(1+5ri)\pi}{5rm}\right)-\sum_{i=0}^{m-1}(-1)^i\cos\left(\frac{(2+5ri)\pi}{5rm}\right)&=0\\
\sum_j\ell_j =\sum_{i=0}^{m-1}(-1)^i - \sum_{i=0}^{m-1}(-1)^i &=0\\
\sum_{\substack{j\\j\equiv1,4\mod5}}\ell_j = \sum_{i=0}^{m-1}(-1)^i &=1.
\end{align*}
Thus when $d$ is divisible by an odd number, we do not get pretty good fractional revival.

It remains to show that when $d$ is a power of 2, then we do get pretty good fractional revival between vertices $u=d$ and $v=3d$.  Let us write $d=2^k$, so $n=5\cdot 2^k-1$.  Using Theorem \ref{thm:kronecker} again, let us suppose that
\[
\sum_{j=1}^{5d-1}\ell_j2\cos\frac{\pi j}{5d}=0
\]
with $\ell_j\in\Z$. As we saw above, those $j$ divisible by 5 correspond to eigenvectors not supported on $u$ and $v$, so we may assume that $\ell_{5i}=0$ for all $i$.  We can rewrite this as
\[
\sum_{j=1}^{5d-1}\ell_j(\zeta^j+\zeta^{10d-j})=0
\]
where $\zeta = e^{2\pi i/10d}$ is a primitive $10d$th root of unity.  Let us alternatively write this as
\[
\sum_{j=1}^{10d-1}\ell_j\zeta^j
\]
with the condition
\begin{equation}\label{ell}
\ell_j = \ell_{10d-j}
\end{equation}
for $j=1,...,5d-1$.  Since we are still assuming $\ell_j=0$ for $j$ divisible by 5, we have that
\[
\sum_{j\equiv0\mod5}\ell_j\zeta^j=0.
\]  Thus by Theorem 2.3 of \cite{lenstra} we have that
\[
\sum_{j\equiv r\mod5}\ell_j\zeta^j=0
\]
for all $r$.  In particular, we have
\[
\sum_{j\equiv1\mod5}\ell_j\zeta^j=0.
\]
Let us define the polynomial 
\[
P(x) = \sum_{\substack{j\equiv1\mod5\\1\leq j<10d}}\ell_jx^{(j-1)/5}.
\]
Then $P$ has $x=\zeta^5$ as a root.  Note that $\zeta^5$ is a $2^{k+1}$th root of unity, and thus $P(x)$ is divisible by the $2^{k+1}$th cyclotomic polynomial
\[
\Phi_{2^{k+1}}(x) = x^{2^k}+1.
\]
This, in particular, implies that $P(1)$ is even.  But
\begin{align*}
P(1) &= \sum_{\substack{j\equiv1\mod5\\1\leq j<10d}}\ell_j\\ &= \sum_{\substack{j\equiv1\mod5\\1\leq j<5d}}\ell_j+\sum_{\substack{j\equiv1\mod5\\5d+1\leq j<10d}}\ell_j\\
&=\sum_{\substack{j\equiv1\mod5\\1\leq j<5d}}\ell_j+\sum_{\substack{j\equiv4\mod5\\1\leq j<5d}}\ell_{10d-j}\\
&=\sum_{\substack{j\equiv1,4\mod5\\1\leq j<5d}}\ell_j
\end{align*}
where the last line follows from (\ref{ell}).  Since this sum is even, it is not $\pm1$, and thus we get pretty good fractional revival by Theorem \ref{thm:kronecker}.
\end{proof}

\section{Cycles}

In this section, we extend Pal and Bhattacharjya's characterization of PGST in cycles, \cite{MR3665556}, to PGFR.
They showed that such cycles must have $2^k$ vertices, for some $k\geq 2$, and PGST must occur between antipodal vertices.
Since PGST is a special case of PGFR, these cycles have PGFR between their antipodal vertices.
We determine all the cycles admitting PGFR.

Let $C_n$ be a cycle on vertex set $\Z_n$ with vertex $a$ adjacent to $a-1$ and $a+1$, for $a\in \Z_n$.  
Let $\omega$ be a primitive $n$-th root of unity.   Then the adjacency matrix, $A$, of $C_n$
has eigenvectors 
\begin{equation*}
\phi_j=\begin{bmatrix} w^j & w^{2j} & \cdots& w^{(n-1)j} \end{bmatrix}^T
\end{equation*}
with corresponding eigenvalue $\lambda_j = 2\cos \frac{2\pi j}{n}$, for $j = 0, 1\ldots,n-1$.
For each $j$,
\begin{equation*}
\phi_j(b) = \omega^{(b-a)j} \phi_j(a).
\end{equation*}
Hence there exists non-zero constant $c$ such that $\omega^{(a-b)j}$ is equal to $c$ or $-1/c$, for all $j$, if and only if 
$n$ is even, $b=a+n/2$, and $c=1$.   In this case, we have
\begin{equation}
\label{Eqn:Cycle}
\phi_j\left(a+\frac{n}{2}\right) = 
\begin{cases}
 \phi_j(a) & \text{ if $j$ is even,}\\
 -\phi_j(a) &\text{ if $j$ is odd.}
 \end{cases}
\end{equation}
In other words, antipodal vertices in even cycles are the only strongly (fractionally) cospectral vertices in cycles.

We first show that there is PGFR in cycles on $2p^k$ vertices, for any odd prime $p$ and $k\geq 1$.
\begin{lemma}
Let $p$ be an odd prime and $n=2p^k$ for some $k\geq 1$.  Then $C_n$ has PGFR between antipodal vertices.
\end{lemma}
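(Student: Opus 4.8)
The plan is to verify the two conditions of Theorem~\ref{thm:kronecker} for the antipodal pair $a$, $b=a+n/2$ in $C_n$ with $n=2p^k$. Condition~1 (fractional cospectrality) is already in hand: by \eqref{Eqn:Cycle} the restriction $\widetilde{\phi_j}$ is a scalar multiple of $[1,1]^T$ when $j$ is even and of $[1,-1]^T$ when $j$ is odd, so $a$ and $b$ are strongly fractionally cospectral with respect to the circulant $H=\begin{bmatrix}0&1\\1&0\end{bmatrix}$, and the induced grouping of eigenvalues is $\Pi_1=\{\lambda_j:j\text{ even}\}$, $\Pi_2=\{\lambda_j:j\text{ odd}\}$. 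The work is therefore to establish Condition~2: whenever integers $\ell_0,\dots,\ell_{n-1}$ satisfy $\sum_j\ell_j\lambda_j=0$ and $\sum_j\ell_j=0$, then $\sum_{j\text{ even}}\ell_j\neq\pm1$.

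First I would pass from eigenvalues to roots of unity. Writing $\lambda_j=\omega^j+\omega^{-j}$ and reindexing the $\omega^{-j}$ terms by $j\mapsto(n-j)\bmod n$, the hypothesis $\sum_j\ell_j\lambda_j=0$ becomes $M(\omega)=0$, where $M(x)=\sum_{j=0}^{n-1}m_jx^j$ and $m_j=\ell_j+\ell_{(n-j)\bmod n}$. Since $\omega$ is a primitive $n$-th root of unity, its minimal polynomial over $\Q$ is $\Phi_n$, so $\Phi_n\mid M$ in $\Q[x]$, and by Gauss's lemma $M=\Phi_n\,h$ for some $h\in\Z[x]$ (the case $M=0$ being trivial).

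Next I would evaluate $M$ at $-1$ in two ways. On the cyclotomic side, using $\Phi_{2p^k}(x)=\Phi_{p^k}(-x)$ (valid since $p^k>1$ is odd) together with $\Phi_{p^k}(1)=p$, one gets $\Phi_n(-1)=p$, hence $p\mid M(-1)=\Phi_n(-1)h(-1)$. On the combinatorial side, since $n$ is even one has $(-1)^{(n-j)\bmod n}=(-1)^j$, so $M(-1)=2\sum_j\ell_j(-1)^j$; and $\sum_j\ell_j=0$ forces $\sum_j\ell_j(-1)^j=2\sum_{j\text{ even}}\ell_j$, giving $M(-1)=4\sum_{j\text{ even}}\ell_j$. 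Combining the two, $p\mid 4\sum_{j\text{ even}}\ell_j$, and since $\gcd(p,4)=1$ we get $p\mid\sum_{j\text{ even}}\ell_j$; as $p\geq3$, this sum cannot equal $\pm1$. Thus Condition~2 holds, and Theorem~\ref{thm:kronecker} yields PGFR between $a$ and $b$.

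The main obstacle is really just careful bookkeeping rather than a deep difficulty: one must track the passage between the $\ell_j$ indexed by eigenvalues (which come in equal pairs $\lambda_j=\lambda_{n-j}$) and the coefficient vector $(m_j)$ of $M$, and pin down the two cyclotomic evaluations. In particular I would check the exceptional indices $j=0$ and $j=n/2=p^k$ — the latter being odd, so it lies in $\Pi_2$ and does not enter $\sum_{j\text{ even}}\ell_j$ — and confirm that $j\mapsto(n-j)\bmod n$ is a parity-preserving bijection of $\Z_n$, which holds because $n$ is even.
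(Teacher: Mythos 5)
Your proof is correct and takes essentially the same route as the paper's: both reduce condition 2 of Theorem~\ref{thm:kronecker} to a cyclotomic divisibility statement and use $\Phi_{p^k}(1)=\Phi_{2p^k}(-1)=p$ to conclude $p\mid 4\sum_{j\ \mathrm{even}}\ell_j$, hence that this sum cannot be $\pm1$. The only difference is cosmetic: the paper first folds the coefficients via $\lambda_r=\lambda_{2p^k-r}=-\lambda_{p^k\pm r}$ into a polynomial in $-\omega$ divisible by $\Phi_{p^k}$ and evaluates at $1$, whereas you keep the full degree-$(n-1)$ polynomial in $\omega$ divisible by $\Phi_{2p^k}$ and evaluate at $-1$; the two computations are related by the substitution $x\mapsto -x$.
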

\begin{proof}
Let $\ell_0, \ell_1,\ldots, \ell_{n-1}$ be integers satisfying
\begin{equation}
\label{Eqn:Cycle2pk1}
\sum_{r=0}^{n} \ell_r \lambda_r=0
\end{equation}
and
\begin{equation}
\label{Eqn:Cycle2pk2}
\sum_{r=0}^{n} \ell_r = \sum_{r\ \mathrm{even}} \ell_r + \sum_{s\ \mathrm{odd}} \ell_s =0.
\end{equation}
Since
\begin{equation*}
\lambda_r= \lambda_{2p^k-r}=-\lambda_{p^k-r}=-\lambda_{p^k+r}, \quad \text{for  $r=1,\ldots, (p^k-1)/2$,}
\end{equation*}
we can rewrite Equation~(\ref{Eqn:Cycle2pk1}) as
\begin{equation*}
2 \left(\ell_0 - \ell_{p^k}\right) + \sum_{r=1}^{(p^k-1)/2} \left(\ell_r+\ell_{2p^k-r}-\ell_{p^k-r}-\ell_{p^k+r}\right) \lambda_r = 0.
\end{equation*}
Define 
\begin{equation*}
h_r =
\begin{cases}
(-1)^r \left(\ell_r+\ell_{2p^k-r}-\ell_{p^k-r}-\ell_{p^k+r}\right) & \text{if $r=1,\ldots, \frac{p^k-1}{2}$,}\\
\ell_0-\ell_{p^k} & \text{if $r=0$.}
\end{cases}
\end{equation*}
Then
\begin{equation*}
\sum_{r=0}^{(p^k-1)/2} h_r =  \sum_{r\ \mathrm{even}} \ell_r - \sum_{s\ \mathrm{odd}} \ell_s,
\end{equation*}
and Equation~(\ref{Eqn:Cycle2pk1}) yields
\begin{equation*}
2h_0 + \sum_{r=1}^{(p^k-1)/2} h_r \left( (-\omega)^r + (-\omega)^{-r} \right)= 0.
\end{equation*}
As a result, $-\omega$ is a root of the polynomial
\begin{equation*}
h(x) = \sum_{r=1}^{(p^k-1)/2} h_r x^{\frac{p^k-1}{2}+r} + 2h_0 x^{\frac{p^k-1}{2}} +  \sum_{r=1}^{(p^k-1)/2} h_r x^{\frac{p^k-1}{2}-r}.
\end{equation*}
The $2p^k$-th cyclotomic polynomial satisfies $\Phi_{2p^k}(x) = \Phi_{p^k}(-x)$.
Hence $-\omega$ is a root of 
\begin{equation*}
\Phi_{p^k}(x) = \sum_{s=0}^{p-1} x^{p^{k-1}s}.
\end{equation*}
As a result, there exists a unique polynomial $g(x)$ of degree $p^{k-1}-1$ such that
\begin{equation*}
h(x)=g(x) \Phi_{p^k}(x).
\end{equation*}
When $x=1$, we get
\begin{equation*}
h(1)=2\sum_{r=0}^{(p^k-1)/2} h_r =2\sum_{r\ \mathrm{even}} \ell_r - 2\sum_{s\ \mathrm{odd}} \ell_s  =  g(1) p.
\end{equation*}
Together with Equation~(\ref{Eqn:Cycle2pk2}), we get
\begin{equation*}
4 \sum_{r\ \mathrm{even}} \ell_r   =  g(1) p.
\end{equation*}
We conclude that $\sum_{r\ \mathrm{even}} \ell_r \neq \pm1$ and there is PGFR between antipodal vertices in $C_{2p^k}$.
\end{proof}

We need to the following equation from \cite[Equation (7)]{MR3665556} to rule out PGFR in $C_n$ when $n$ is divisible by two distinct odd primes.
Let $n=mp$ for some odd prime $p$.  Then
\begin{equation}
\label{Eqn:EigenvaluesCn}
\left(\lambda_2 - \lambda_1\right) + \sum_{r=1}^{\frac{p-1}{2}}\left( \lambda_{mr+2}- \lambda_{mr+1}\right) - \sum_{r=1}^{\frac{p-1}{2}}\left( \lambda_{mr-1}- \lambda_{mr-2}\right)=0.
\end{equation}

\begin{lemma}
If $n$ is divisible by $2pq$ for some distinct odd primes $p$ and $q$, then there is no PGFR in $C_n$.
\end{lemma}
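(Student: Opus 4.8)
The plan is to exhibit, for the only pair of vertices that could possibly support PGFR, an integer vector violating condition~2 of Theorem~\ref{thm:kronecker}. Since $2pq\mid n$ forces $n$ even, and since the discussion around~\eqref{Eqn:Cycle} shows that the only strongly fractionally cospectral pairs in a cycle are antipodal pairs, Lemma~\ref{lem:pgfr-strfrcsp} reduces the statement to ruling out PGFR between $a$ and $a+n/2$ (and by the rotational automorphism of $C_n$ one such pair suffices). For that pair, \eqref{Eqn:Cycle} identifies the fractional-cospectrality grouping as $\Pi_1=\{\lambda_j: j\text{ even}\}$ and $\Pi_2=\{\lambda_j: j\text{ odd}\}$, so by Theorem~\ref{thm:kronecker} it is enough to produce integers $\ell_0,\dots,\ell_{n-1}$ with $\sum_j\ell_j\lambda_j=0$, $\sum_j\ell_j=0$, and $\sum_{j\text{ even}}\ell_j=1$.

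First I would apply the identity~\eqref{Eqn:EigenvaluesCn} with $m=n/p$. The point is that $2q\mid m$, so $m$ is even and every index $mr$ occurring in~\eqref{Eqn:EigenvaluesCn} is even; hence the even-indexed eigenvalues appearing there are exactly $\lambda_2$, $\lambda_{mr+2}$ and $\lambda_{mr-2}$ (all with coefficient $+1$), while the odd-indexed ones are $\lambda_1$, $\lambda_{mr+1}$, $\lambda_{mr-1}$ (all with coefficient $-1$). Using $m=2qt\ge6$ and $n/(2p)=qt\ge3$ one checks that all $2p$ of these indices are distinct and lie in $\{1,\dots,n-1\}$, so~\eqref{Eqn:EigenvaluesCn} reads as an honest integer relation $\ell^{(p)}$ with $\sum_j\ell^{(p)}_j\lambda_j=0$, $\sum_j\ell^{(p)}_j=0$, and $\sum_{j\text{ even}}\ell^{(p)}_j=p$. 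Repeating this with the roles of $p$ and $q$ swapped (now $2p\mid n/q$) gives a second relation $\ell^{(q)}$ with $\sum_j\ell^{(q)}_j\lambda_j=0$, $\sum_j\ell^{(q)}_j=0$, and $\sum_{j\text{ even}}\ell^{(q)}_j=q$.

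Finally, since $p\neq q$ are primes we have $\gcd(p,q)=1$, so I would pick $s,t\in\Z$ with $sp+tq=1$ and set $\ell_j=s\,\ell^{(p)}_j+t\,\ell^{(q)}_j$. Linearity gives $\sum_j\ell_j\lambda_j=0$ and $\sum_j\ell_j=0$, while $\sum_{j\text{ even}}\ell_j=sp+tq=1$, contradicting condition~2 of Theorem~\ref{thm:kronecker}; hence there is no PGFR in $C_n$. Equivalently, one can phrase the conclusion as: the additive subgroup of $\Z$ of attainable values of $\sum_{j\text{ even}}\ell_j$ contains both $p$ and $q$, hence contains $1$, hence equals $\Z$.

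I expect the only real work to be the bookkeeping in the second paragraph: confirming that with $m=n/p$ even the even-indexed part of~\eqref{Eqn:EigenvaluesCn} sums to exactly $p$ (and not to $0$ or $\pm1$), that none of the listed indices collide or leave $\{1,\dots,n-1\}$, and that the indices are read literally in $\{0,\dots,n-1\}$ so that parity is well defined even when $mr\pm2>n/2$. Once that is secured, the conceptual step is simply taking the Bézout combination of the two instances of~\eqref{Eqn:EigenvaluesCn}.
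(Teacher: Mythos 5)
Your proof is correct and rests on the same two ingredients as the paper's: the eigenvalue identity \eqref{Eqn:EigenvaluesCn} applied once with $m=n/p$ and once with $m=n/q$, followed by a B\'ezout combination. The only (cosmetic) difference is that you package the conclusion as an explicit integer relation violating condition~2 of Theorem~\ref{thm:kronecker} with $\sum_{j\ \mathrm{even}}\ell_j=1$, whereas the paper works with the phase $\mu$ of the would-be matrix $H$, deriving $p\mu\equiv q\mu\equiv 0 \pmod{2\pi}$ and hence $\mu\equiv 0$; your bookkeeping (all $2p$ indices distinct and the even-indexed coefficients summing to $p$ because $n/p$ is even) checks out.
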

\begin{proof}
Suppose $C_n$ has PGFR between antipodal vertices.
There exists a $2\times 2$ non-diagonal symmetric unitary matrix 
\begin{equation*}
H = \begin{bmatrix} \alpha & \beta\\ \beta & \gamma \end{bmatrix}
\end{equation*}
such that, for each $\epsilon >0$, there exists $t_{\epsilon}$ with
\begin{equation*}
U(t_{\epsilon}) \approx 
\begin{bmatrix} H & \0\\ \0 & H' \end{bmatrix}.
\end{equation*}
By Equation~(\ref{Eqn:Cycle}), we have
\begin{equation*}
\wt{\phi}_j = 
\begin{cases}
\begin{bmatrix}1 &1\end{bmatrix}^T & \text{if $j$ is even,}\\
\begin{bmatrix}1 &-1\end{bmatrix}^T & \text{if $j$ is odd,}
\end{cases}
\end{equation*}
and $\wt{\phi}_j$ being an eigenvector of $H$ implies $\gamma=\alpha$.
In this case, we have 
\begin{equation*}
H \wt{\phi}_j= (\alpha+(-1)^j \beta) \wt{\phi}_j  \quad \text{for $j=0,\ldots,n-1$.}
\end{equation*}
Since $H$ is a non-diagonal unitary matrix, we have $\beta\neq 0$ and $\alpha \neq \pm \beta$.

Let $\mu$ be the non-zero real number satisfying $e^{\ii \mu} = (\alpha+\beta)/(\alpha-\beta)$.
It follows from $U(t_{\epsilon})\phi_j = e^{\ii t_{\epsilon} \lambda_j} \phi_j$ that
\begin{equation*}
t_{\epsilon} (\lambda_j - \lambda_{j-1}) \approx
\begin{cases}
\mu \pmod{2\pi} & \text{if $j$ is even,}\\
-\mu \pmod{2\pi} & \text{if $j$ is odd.}\\
\end{cases}
\end{equation*}
Applying Lemma~\ref{lem:Kron} to Equation~(\ref{Eqn:EigenvaluesCn}) yields 
\begin{equation*}
p\mu = 0 \pmod{2\pi}.
\end{equation*}
Similarly, we have $q\mu=0\pmod{2\pi}$.

As $p$ and $q$ are coprime, there exist integers $h$ and $k$ such that $hp+kq=1$.
Then $\mu = hp\mu+kq\mu=0\pmod{2\pi}$, which implies $\beta=0$, a contradiction to $H$ being non-diagonal.
\end{proof}

\begin{lemma}
Let $n=2^hp^s$ for some odd prime $p$, $s\geq 1$ and $h\geq 2$.  There is no PGFR in $C_n$.
\end{lemma}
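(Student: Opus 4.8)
The plan is to argue by contradiction, following the template of the previous lemma (the case $2pq\mid n$). Assuming $C_n$ has PGFR, the discussion around Equation~(\ref{Eqn:Cycle}) forces it to occur between a pair of antipodal vertices, and, exactly as in that proof, there is a $2\times 2$ non-diagonal symmetric unitary $H=\begin{bmatrix}\alpha&\beta\\\beta&\alpha\end{bmatrix}$ with $\beta\neq 0$ and $\alpha\neq\pm\beta$, together with a nonzero real $\mu$ satisfying $e^{\ii\mu}=(\alpha+\beta)/(\alpha-\beta)$, such that for every $\epsilon>0$ there is a $t_\epsilon$ with $t_\epsilon(\lambda_j-\lambda_{j-1})$ within $\epsilon$ of $\mu\pmod{2\pi}$ when $j$ is even and within $\epsilon$ of $-\mu\pmod{2\pi}$ when $j$ is odd. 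All of this is verbatim the earlier argument, so I would simply invoke it; the new work is to extract two arithmetically incompatible constraints on $\mu$.

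The first constraint comes from the prime $p$. Since $p\mid n$ and $m:=n/p=2^{h}p^{s-1}$ is even, Equation~(\ref{Eqn:EigenvaluesCn}) applies with this $m$. Because $m$ is even, every consecutive difference $\lambda_j-\lambda_{j-1}$ occurring with a $+$ sign in that identity has $j$ even and every one occurring with a $-$ sign has $j$ odd, so applying Lemma~\ref{lem:Kron} to the identity yields, as in the previous proof, $p\mu\equiv 0\pmod{2\pi}$.

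The second constraint is the genuinely new ingredient: I need an eigenvalue identity of $C_n$ whose parity bookkeeping uses $4\mid n$, so that it is unavailable when $n=2p^{s}$ (where PGFR does occur). I would use $\lambda_{n/2}-\lambda_{n/2-1}=\lambda_1-\lambda_0$, which holds because $\lambda_0=2$, $\lambda_{n/2}=-2$, and $\lambda_{n/2-1}=2\cos(\pi-2\pi/n)=-\lambda_1$, so both sides equal $\lambda_1-2$. The index $1$ is odd, and the index $n/2=2^{h-1}p^{s}$ is \emph{even precisely because $h\ge 2$}; since $(\lambda_{n/2}-\lambda_{n/2-1})-(\lambda_1-\lambda_0)=0$ is a $\Z$-linear relation among the consecutive differences $\lambda_j-\lambda_{j-1}$, Lemma~\ref{lem:Kron} turns it into $\mu-(-\mu)=2\mu\equiv 0\pmod{2\pi}$.

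To finish, since $\gcd(2,p)=1$, write $1=2a+pb$ with $a,b\in\Z$; then $\mu\equiv a(2\mu)+b(p\mu)\equiv 0\pmod{2\pi}$, so $(\alpha+\beta)/(\alpha-\beta)=1$, i.e.\ $\beta=0$, contradicting that $H$ is non-diagonal. The only step that requires any thought is locating the $2$-adic identity with the correct parity; everything else is routine or a citation to the previous lemma. As a variant staying closer to Theorem~\ref{thm:kronecker} itself, one can instead directly produce an integer tuple $(\ell_j)$ with $\sum_j\ell_j\lambda_j=0$, $\sum_j\ell_j=0$, and $\sum_{j\ \mathrm{even}}\ell_j=1$, obtained as a B\'ezout combination of the tuple underlying Equation~(\ref{Eqn:EigenvaluesCn}) (whose even part is $p$) and the tuple $\ell_0=\ell_{n/2}=1$, $\ell_1=\ell_{n/2-1}=-1$ (whose even part is $2$).
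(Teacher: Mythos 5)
Your proof is correct, but it takes a genuinely different route from the paper's. The paper's proof is a direct application of Theorem \ref{thm:kronecker}: it invokes Lemma \ref{lem:cos} with $m=p^s$ and $k=2^{h-1}$ to produce two vanishing alternating sums of eigenvalues supported on the residue classes $a=0$ and $a=1$ modulo $2^{h-1}$, and subtracts them to get an explicit integer tuple $(\ell_r)$ with $\sum_r\ell_r\lambda_r=0$, $\sum_r\ell_r=0$, and $\sum_{r\ \mathrm{even}}\ell_r=\sum_{j=0}^{p^s-1}(-1)^j=1$, which kills PGFR outright. You instead rerun the contradiction scheme of the preceding lemma (the $2pq$ case) with the prime $q$ replaced by the prime $2$: Equation~(\ref{Eqn:EigenvaluesCn}) with $m=n/p$ even gives $p\mu\equiv 0\pmod{2\pi}$ exactly as before, and your new identity $\lambda_{n/2}-\lambda_{n/2-1}=\lambda_1-\lambda_0$ (valid since $\lambda_{n/2-1}=-\lambda_1$), with $n/2$ even precisely because $h\geq 2$, gives $2\mu\equiv 0\pmod{2\pi}$; B\'ezout then forces $\mu\equiv 0$ and $\beta=0$. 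Both arguments are sound. Yours has the merit of unifying this lemma with the $2pq$ lemma under a single "two coprime constraints on $\mu$" mechanism and of making visible exactly where $h\geq 2$ enters (the parity of $n/2$), while the paper's is shorter, stays entirely within the combinatorial framework of Theorem \ref{thm:kronecker}, and reuses the same cosine-sum lemma already deployed in the path section. Your closing "variant" (B\'ezout-combining the tuple underlying Equation~(\ref{Eqn:EigenvaluesCn}), whose even-indexed sum is $p$, with the tuple $\ell_0=\ell_{n/2}=1$, $\ell_1=\ell_{n/2-1}=-1$, whose even-indexed sum is $2$) is also correct and is in fact the cleanest bridge between your argument and the paper's, since it lands directly on a tuple violating condition 2 of Theorem \ref{thm:kronecker}.
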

\begin{proof}
Applying Lemma~\ref{lem:cos} with $m=p^s$ and $k=2^{h-1}$, we get
\begin{equation*}
\sum_{j=0}^{p^s-1} (-1)^j \lambda_{2^{h-1}j+a}=0, \quad \text{for $0\leq a <2^{h-1}$.}
\end{equation*} 
When $a=0$ and $a=1$, we get 
\begin{equation*}
\sum_{j=0}^{p^s-1} (-1)^j  \lambda_{2^{h-1}j}=0
\quad \text{and}\quad
\sum_{j=0}^{p^s-1} (-1)^j  \lambda_{2^{h-1}j+1}=0
\end{equation*}
Define
\begin{equation*}
\ell_r = 
\begin{cases}
(-1)^j, & \text{if $r=2^{h-1}j$,}\\
(-1)^{j+1}, & \text{if $r=2^{h-1}j+1$,}\\
0 &\text{otherwise.}
\end{cases}
\end{equation*}
Then we have $\sum_{r=0}^{n-1}\ell_r\lambda_r=0$, $\sum_{r=0}^{n-1} \ell_r = 0$ but
\begin{equation*}
 \sum_{r\ \mathrm{even}} \ell_r =1.
\end{equation*}
Hence there is no PGFR in $C_n$.
\end{proof}

\begin{theorem}
Pretty good fractional revival occurs between $a$ and $b$ in $C_n$ if and only if 
$n=2p^k$, for some prime $p$, for $k\geq 1$,
and $b=a+n/2$.
\qed
\end{theorem}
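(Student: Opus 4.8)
The plan is to assemble the final theorem by combining the three lemmas just proved with the structural facts established at the start of the cycles section. The starting point is the observation, recorded immediately after Equation~(\ref{Eqn:Cycle}), that the only strongly (fractionally) cospectral pairs of vertices in any cycle are antipodal pairs in an even cycle. By Lemma~\ref{lem:pgfr-strfrcsp}, strong fractional cospectrality is necessary for PGFR, so PGFR in $C_n$ can only occur when $n$ is even and $b=a+n/2$. This reduces the whole problem to deciding, for each even $n$, whether PGFR does occur between antipodal vertices.

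Next I would run the case analysis on the prime factorization of the even number $n$. Write $n=2^h p_1^{s_1}\cdots p_t^{s_t}$ with the $p_i$ distinct odd primes. If $t\geq 2$, then $n$ is divisible by $2p_1p_2$ with $p_1,p_2$ distinct odd primes, so the second lemma of this section rules out PGFR. If $t=1$ and $h\geq 2$, then $n=2^h p^s$ with $h\geq2$, and the third lemma rules out PGFR. If $t=1$ and $h=1$, then $n=2p^s$ with $p$ an odd prime and $s\geq1$, and the first lemma of this section establishes that PGFR does occur between antipodal vertices. Finally, if $t=0$, then $n=2^h$; for $h=1$ we have $n=2$, which is the degenerate two-vertex path/edge, and for $h\geq2$ the result of Pal and Bhattacharjya~\cite{MR3665556} gives PGST---hence PGFR---between antipodal vertices, and this is exactly the case $n=2p^k$ with $p=2$, $k=h$. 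Collecting the cases, PGFR between $a$ and $b$ occurs if and only if $n=2p^k$ for some prime $p$ (allowing $p=2$) and $k\geq1$, with $b=a+n/2$, which is the statement.

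The only genuinely delicate point is making sure the boundary cases line up with the stated hypotheses: the theorem allows $p=2$, so one must check that the $n=2^h$ family is genuinely covered by ``$n=2p^k$ for some prime $p$, $k\geq1$'' with $p=2$ and $k=h$, and confirm that the PGST result of \cite{MR3665556} applies for all $h\geq2$; and one should note that for $h=1$ the cycle $C_2$ is not a simple graph in the usual sense, so it is excluded on trivial grounds rather than by any of the lemmas. The rest is bookkeeping: every even $n$ falls into exactly one of the four cases above, the three lemmas and the cited PGST characterization dispose of each, and no further computation is needed.
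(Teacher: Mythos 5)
Your proposal is correct and assembles the theorem in essentially the same way the paper does: reduce to antipodal pairs in even cycles via strong fractional cospectrality, then dispose of each factorization case using the three lemmas of the section together with the cited PGST result for $n=2^k$. The paper leaves this assembly implicit (the theorem is stated with an immediate \qed), and your explicit case check, including the observation that $n=2^h$ is covered as $2\cdot 2^{h-1}$, matches the intended argument.
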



\paragraph{Acknowledgements} G.L. was supported by NSF/DMS-1800738 and the Simons Foundation Collaboration Grant. O.E was supported by the Herchel Smith Harvard Undergraduate Research Program. 

We all acknowledge the support of Chris Godsil's NSERC Accelerator Grant that funded the workshop Algebraic Graph Theory and Quantum Walks, where we all started to work on this project.

\bibliographystyle{plain}
\bibliography{quantum}

\end{document}